\newcommand{\tun}{\begin{picture}(5,0)(-2,-1)
\put(0,0){\circle*{2}}
\end{picture}}
\newcommand{\tdun}[1]{\begin{picture}(10,5)(-2,-1)
\put(0,0){\circle*{2}}
\put(3,-2){\tiny #1}
\end{picture}}
\newcommand{\arbreun}{
\begin{picture}(30,60)(-10,0)
\put(0,0){\line(0,0){10}}
\put(0,10){\line(1,1){20}}
\put(0,10){\line(-1,1){10}}
\put(10,20){\line(-1,1){10}}
\put(22,32){.}
\put(24,34){.}
\put(26,36){.}
\put(30,40){\line(1,1){10}}
\put(40,50){\line(1,1){10}}
\put(30,40){\line(-1,1){10}}
\put(40,50){\line(-1,1){10}}
\put(-17,23){\tiny $\sigma(1)$}
\put(-7,33){\tiny $\sigma(2)$}
\put(1,53){\tiny $\sigma(n-2)$}
\put(11,63){\tiny $\sigma(n-1)$}
\put(41,63){\tiny $\sigma(n)$}
\put(3,7){\tiny $a_1$}
\put(13,17){\tiny $a_2$}
\put(35,39){\tiny $a_{n-2}$}
\put(45,49){\tiny $a_{n-1}$}
\end{picture}}
\newcommand{\arbredeux}{
\begin{picture}(30,100)(-10,0)
\put(0,0){\line(0,0){10}}
\put(0,10){\line(1,1){20}}
\put(0,10){\line(-1,1){10}}
\put(10,20){\line(-1,1){10}}
\put(22,32){.}
\put(24,34){.}
\put(26,36){.}
\put(30,40){\line(1,1){10}}
\put(40,50){\line(1,1){10}}
\put(30,40){\line(-1,1){10}}
\put(40,50){\line(-1,1){10}}
\put(-17,23){\tiny $\sigma(1)$}
\put(-7,33){\tiny $\sigma(2)$}
\put(1,53){\tiny $\sigma(i-1)$}
\put(14,63){\tiny $\sigma(i)$}
\put(3,7){\tiny $\bas$}
\put(13,17){\tiny $\bas$}
\put(35,39){\tiny $\bas$}
\put(45,49){\tiny $m$}
\put(52,62){.}
\put(54,64){.}
\put(56,66){.}
\put(60,70){\line(1,1){10}}
\put(70,80){\line(1,1){10}}
\put(60,70){\line(-1,1){10}}
\put(70,80){\line(-1,1){10}}
\put(31,83){\tiny $\sigma(n-2)$} 
\put(41,93){\tiny $\sigma(n-1)$}
\put(71,93){\tiny $\sigma(n)$}
\put(65,69){\tiny $m$}
\put(75,79){\tiny $m$}
\end{picture}}
\newcommand{\arbretrois}{
\begin{picture}(30,100)(-10,0)
\put(0,0){\line(0,0){10}}
\put(0,10){\line(1,1){20}}
\put(0,10){\line(-1,1){10}}
\put(10,20){\line(-1,1){10}}
\put(22,32){.}
\put(24,34){.}
\put(26,36){.}
\put(30,40){\line(1,1){10}}
\put(40,50){\line(1,1){10}}
\put(30,40){\line(-1,1){10}}
\put(40,50){\line(-1,1){10}}
\put(-12,23){\tiny $1$}
\put(-2,33){\tiny $2$}
\put(6,53){\tiny $i-1$}
\put(26,63){\tiny $i$}
\put(3,7){\tiny $\bas$}
\put(13,17){\tiny $\bas$}
\put(35,39){\tiny $\bas$}
\put(45,49){\tiny $m$}
\put(52,62){.}
\put(54,64){.}
\put(56,66){.}
\put(60,70){\line(1,1){10}}
\put(70,80){\line(1,1){10}}
\put(60,70){\line(-1,1){10}}
\put(70,80){\line(-1,1){10}}
\put(36,83){\tiny $n-2$} 
\put(48,93){\tiny $n-1$}
\put(76,93){\tiny $n$}
\put(65,69){\tiny $m$}
\put(75,79){\tiny $m$}
\end{picture}}
\newcommand{\arbrequatre}{
\begin{picture}(30,100)(-10,0)
\put(0,0){\line(0,0){10}}
\put(0,10){\line(1,1){20}}
\put(0,10){\line(-1,1){10}}
\put(10,20){\line(-1,1){10}}
\put(22,32){.}
\put(24,34){.}
\put(26,36){.}
\put(30,40){\line(1,1){10}}
\put(40,50){\line(1,1){10}}
\put(30,40){\line(-1,1){10}}
\put(40,50){\line(-1,1){10}}
\put(-17,23){\tiny $\sigma(1)$}
\put(-7,33){\tiny $\sigma(2)$}
\put(1,53){\tiny $\sigma(i-1)$}
\put(14,63){\tiny $\sigma(i)$}
\put(3,7){\tiny $\haut$}
\put(13,17){\tiny $\haut$}
\put(35,39){\tiny $\haut$}
\put(45,49){\tiny $m$}
\put(52,62){.}
\put(54,64){.}
\put(56,66){.}
\put(60,70){\line(1,1){10}}
\put(70,80){\line(1,1){10}}
\put(60,70){\line(-1,1){10}}
\put(70,80){\line(-1,1){10}}
\put(31,83){\tiny $\sigma(n-2)$} 
\put(41,93){\tiny $\sigma(n-1)$}
\put(71,93){\tiny $\sigma(n)$}
\put(65,69){\tiny $m$}
\put(75,79){\tiny $m$}
\end{picture}}
\newcommand{\bdtroisun}[2]{
\begin{picture}(30,40)(-20,0)
\put(0,0){\line(0,0){10}}
\put(0,10){\line(1,1){10}}
\put(0,10){\line(-1,1){20}}
\put(-10,20){\line(1,1){10}}
\put(-9,7){\tiny #1}
\put(-19,17){\tiny #2}
\end{picture}}
\newcommand{\bdtroisdeux}[2]{
\begin{picture}(30,40)(-10,0)
\put(0,0){\line(0,0){10}}
\put(0,10){\line(1,1){20}}
\put(0,10){\line(-1,1){10}}
\put(10,20){\line(-1,1){10}}
\put(3,7){\tiny #1}
\put(13,17){\tiny #2}
\end{picture}}
\newcommand{\bddtroisun}[5]{
\begin{picture}(30,40)(-20,0)
\put(0,0){\line(0,0){10}}
\put(0,10){\line(1,1){10}}
\put(0,10){\line(-1,1){20}}
\put(-10,20){\line(1,1){10}}
\put(-9,7){\tiny #1}
\put(-19,17){\tiny #2}
\put(-21,32){\tiny #3}
\put(-1,32){\tiny #4}
\put(9,22){\tiny #5}
\end{picture}}
\newcommand{\bddtroisdeux}[5]{
\begin{picture}(30,40)(-10,0)
\put(0,0){\line(0,0){10}}
\put(0,10){\line(1,1){20}}
\put(0,10){\line(-1,1){10}}
\put(10,20){\line(-1,1){10}}
\put(3,7){\tiny #1}
\put(13,17){\tiny #2}
\put(-11,22){\tiny #3}
\put(-1,32){\tiny #4}
\put(19,32){\tiny #5}
\end{picture}}
\title{The operads of planar forests are Koszul}
\date{}
\author{L. Foissy \\ \\
{\small{\it Laboratoire de Mathématiques, Université de Reims}}\\
\small{{\it Moulin de la Housse - BP 1039 - 51687 REIMS Cedex 2, France}}\\
\small{e-mail : loic.foissy@univ-reims.fr}}
\newcommand{\D}{{\cal D}}
\newcommand{\F}{\mathbf{F}}
\newcommand{\T}{\mathbf{T}}
\renewcommand{\P}{\mathbb{P}}
\newcommand{\haut}{\nearrow }
\newcommand{\bas}{\searrow }
\newcommand{\hauttimes}{{\hspace{1mm} \otimes \hspace{-4.5mm} \haut\hspace{1mm}}}
\newcommand{\G}{\P_\bas^!}
\renewcommand{\H}{\P_\haut^!}
\newtheorem{defi}{\indent Definition}
\newtheorem{theo}[defi]{\indent Theorem}
\newtheorem{lemma}[defi]{\indent Lemma}
\newtheorem{cor}[defi]{\indent Corollary}
\newenvironment{proof}{{\bf Proof.}}{\hfill $\Box$}
\begin{document}

\maketitle

ABSTRACT. We describe the Koszul dual of two quadratic operads on planar forests introduced to study the infinitesimal Hopf algebra
of planar rooted trees and prove that these operads are Koszul.\\

KEYWORDS. Koszul quadratic operads, planar rooted trees.\\

AMS CLASSIFICATION. 05C05, 18D50.

\tableofcontents

\section*{Introduction}

The Hopf algebra of planar rooted trees, described in \cite{Foissy1,Holtkamp}, is a non-commutative version of the Hopf algebra
of rooted tree introduced in \cite{Connes,Kreimer1,Kreimer2,Kreimer3} in the context of Quantum Field Theories and Renormalisation.
An infinitesimal version of this object is introduced in \cite{Foissy2}, and is related to  two operads on planar forests in \cite{Foissy3}.
These two operads, denoted by $\P_\bas$ and $\P_\haut$, are presented in the following way:
\begin{enumerate}
\item $\P_\bas$ is generated by $m$ and $\bas \in \P_\bas(2)$, with relations:
$$ \left\{ \begin{array}{rcl}
m\circ (\bas,I)&=&\bas \circ (I,m),\\
m\circ (m,I)&=&m\circ (I,m),\\
\bas \circ (m,I)&=&\bas \circ (I,\bas).
\end{array}\right.$$
\item $\P_\haut$ is generated by $m$ and $\haut \in \P_\haut(2)$, with relations:
$$ \left\{ \begin{array}{rcl}
m\circ (\haut,I)&=&\haut \circ (I,m),\\
m\circ (m,I)&=&m\circ (I,m),\\
\haut \circ (\haut,I)&=&\haut \circ (I,\haut).
\end{array}\right.$$
\end{enumerate}
The algebra of planar rooted trees is both the free $\P_\haut$- and $\P_\bas$-algebra generated by $\tun$,
with products $\haut$ and $\bas$ given by certain graftings.

The operads $\P_\haut$ and $\P_\bas$  are quadratic. Our aim in this note is to prove that they are both Koszul, in the sense of \cite{Ginzburg}.
We describe their Koszul dual (it turns out that they are quotient of $\P_\bas$ and $\P_\haut$)
and the associated homology of $\P_\haut$- or $\P_\bas$-algebras. We compute these homologies for free objects
and prove that they are concentrated in degree $0$. This proves that these operads are Koszul.

\section{Operads of planar forests}

\subsection{Presentation}

We work in this text with operads, whereas we worked in \cite{Foissy3} with non-$\Sigma$-operads.
In other terms, we replace the non-$\Sigma$-operads of \cite{Foissy3} by their symmetrization \cite{Markl}.

\begin{defi}\textnormal{\begin{enumerate}
\item $\P_\bas$ is generated, as an operad, by $m$ and $\bas$, with the relations:
$$ \left\{ \begin{array}{rcl}
\bas \circ (m, I)&=&\bas \circ (I,\bas),\\
\bas \circ (I,m)&=&m \circ (\bas,I),\\
m \circ (m, I)&=&m \circ (I,m).
\end{array}\right.$$
\item $\P_\haut$ is generated, as an operad, by $m$ and $\haut$, with the relations:
$$ \left\{ \begin{array}{rcl}
\haut \circ (\haut, I)&=&\haut \circ (I,\haut),\\
\haut \circ (I,m)&=&m \circ (\haut,I),\\
m \circ (m, I)&=&m \circ (I,m).
\end{array}\right.$$
\end{enumerate}}\end{defi}

{\bf Remarks.} \begin{enumerate}
\item Graphically, the relations defining $\P_\bas$ can be written in the following way:
$$\bddtroisun{$\bas $}{$m$}{$1$}{$2$}{$3$}=\bddtroisdeux{$\bas $}{$\bas $}{$1$}{$2$}{$3$},\hspace{1cm}
 \bddtroisun{$m$}{$m$}{$1$}{$2$}{$3$}=\bddtroisdeux{$m$}{$m$}{$1$}{$2$}{$3$},\hspace{1cm}
 \bddtroisun{$m$}{$\bas $}{$1$}{$2$}{$3$}=\bddtroisdeux{$\bas $}{$m$}{$1$}{$2$}{$3$}.$$
\item We denote by $\tilde{\P}_\bas$ the sub-non-$\Sigma$-operad of $\P_\bas$ generated by $m$ and $\bas$.
Then $\P_\bas$ is the symmetrization of $\tilde{\P}_\bas$.
\item Graphically, the relations of $\H$ can be written in the following way:
$$\bdtroisun{$\haut $}{$\haut $}=\bdtroisdeux{$\haut $}{$\haut $},\hspace{1cm}
 \bdtroisun{$m$}{$m$}=\bdtroisdeux{$m$}{$m$},\hspace{1cm}
 \bdtroisun{$m$}{$\haut $}=\bdtroisdeux{$\haut $}{$m$}.$$
\item  We denote by $\tilde{\P}_\haut$ the sub-non-$\Sigma$-operad of $\P_\haut$ generated by $m$ and $\haut$.
Then $\P_\haut$ is the symmetrization of $\tilde{\P}_\haut$.
\end{enumerate}
Both of these non-$\Sigma$-operads admits a description in terms of planar forests \cite{Foissy3}.
In particular, the dimension of $\tilde{\P}_\bas(n)$ and $\tilde{\P}_\haut(n)$ is given by the $n$-th Catalan number \cite{Stanley,Stanley2}.
Multiplying by a factorial, for all $n \geq 1$:
$$\dim\P_\bas(n)=\dim\P_\haut(n)=\frac{(2n)!}{(n+1)!}.$$
In particular, $\dim \P_\bas(2)=\dim \P_\haut(2)=4$ and $\dim \P_\bas(3)=\dim \P_\haut(3)=30$.

\subsection{Free algebras on these operads}

We described in \cite{Foissy3} the free $\P_\bas$- and $\P_\haut$-algebras on one generators, using planar rooted trees.
We here generalise (without proof) these results. Let $\D$ be any set. We denote by $\T^\D$ the set of planar trees decorated by $\D$ 
and by $\F^\D$ the set of non-empty planar forests decorated by $\D$. 
\begin{enumerate}
\item The free $\P_\bas$-algebra generated by $\D$ has the set $\F^\D$ as a basis. The product $m$ is given by concatenation of forests. 
For all $F$, $G \in \F^\D$, the product $F \bas G$ is obtained by grafting $F$ on the root of $G$, on the left.
\item The free $\P_\haut$-algebra generated by $\D$ has the set $\F^\D$ as a basis. The product $m$ is given by concatenation of forests. 
For all $F$, $G \in \F^\D$, the product $F \haut G$ is obtained by grafting $F$ on the left leaf of $G$.
\end{enumerate}
In both cases, we identified $d \in \D$ with $\tdun{$d$} \in \F^\D$. Moreover, for all $F \in \F^\D$, $F\bas \tdun{$d$}=F \haut \tdun{$d$}$ is the tree
obtained by grafting the trees of $F$ on a common root decorated by $d$: this tree will be denoted by $B_d(F)$.

\section{The operad $\P_\bas$ is Koszul}

\subsection{Koszul dual of $\P_\bas$}

(See \cite{Ginzburg,Markl} for the notion of Koszul duality for quadratic operads). We denote by $\G$ the Koszul dual of $\P_\bas$.

\begin{theo} \label{2}
The operad $\G$ is generated by $m$ and $\bas \in \G(2)$, with the relations:
$$ \left\{ \begin{array}{rcl}
\bas\circ(m,I)&=&\bas\circ(I,\bas),\\
m\circ(m,I)&=&m\circ(I,m),\\
m\circ(\bas,I)&=&\bas\circ(I,m),\\
\bas\circ(\bas,I)&=&0,\\
m\circ(I,\bas)&=&0.
\end{array}\right.$$
\end{theo}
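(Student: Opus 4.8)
The plan is to compute the Koszul dual directly from the Ginzburg--Kapranov recipe: writing $\P_\bas=\mathcal{F}(E)/(R)$ with $E=\P_\bas(2)$ the space of generators and $R\subset\mathcal{F}(E)(3)$ the space of relations, one has $\G=\mathcal{F}(E^\vee)/(R^\perp)$, where $E^\vee=E^*\otimes\mathrm{sgn}$ and $R^\perp$ is the annihilator of $R$ under the canonical quadratic pairing on $\mathcal{F}(E^\vee)(3)\times\mathcal{F}(E)(3)$. First I would check that the sign twist is harmless: since $\P_\bas$ is the symmetrization of the non-$\Sigma$-operad $\tilde{\P}_\bas$ generated by the two \emph{free} binary operations $m$ and $\bas$, one has $E\cong k[S_2]\oplus k[S_2]$ as an $S_2$-module, whence $E^\vee\cong E$ and the dual operad is again generated by two binary operations, still denoted $m$ and $\bas$. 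This lets me carry out the whole computation in the planar (non-$\Sigma$) picture and recover the symmetric statement at the end by $S_3$-equivariance of the pairing.

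Next comes the dimension bookkeeping. In the planar picture $\mathcal{F}(E)(3)$ is spanned, for a fixed leaf-ordering, by the eight monomials $\mu\circ(\nu,I)$ and $\mu\circ(I,\nu)$ with $\mu,\nu\in\{m,\bas\}$ (two tree shapes, four labellings each), so $\dim\mathcal{F}(E)(3)=48$ in total. Since $\dim\P_\bas(3)=30$, the $S_3$-span of $R$ has dimension $18=3\cdot 6$, i.e. $R$ is $3$-dimensional in the planar picture. Orthogonality then forces $R^\perp$ to be $8-3=5$-dimensional there, which already predicts that $\G$ should be presented by five relations, in agreement with the statement.

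The core computation fixes the self-dual pairing with $\langle m,m\rangle=\langle\bas,\bas\rangle=1$, $\langle m,\bas\rangle=0$, orthogonal tree shapes, and the crucial relative sign between the two shapes --- the one for which $\mathrm{Ass}$ comes out self-dual, namely shape $\circ(-,I)$ contributing $+$ and shape $\circ(I,-)$ contributing $-$. The three defining relations of $\P_\bas$ become the vectors $\bas\circ(m,I)-\bas\circ(I,\bas)$, $\bas\circ(I,m)-m\circ(\bas,I)$, and $m\circ(m,I)-m\circ(I,m)$. Writing a general $v$ in the eight-dimensional basis and imposing $\langle v,r\rangle=0$ for these three $r$ amounts, via the diagonal Gram matrix $\mathrm{diag}(1,1,1,1,-1,-1,-1,-1)$, to three linear conditions with two further free directions; reading off a basis of the solution space reproduces exactly the first three relations of the statement together with the two vanishing relations $\bas\circ(\bas,I)=0$ and $m\circ(I,\bas)=0$. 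I would then observe that the two relations shared with $\P_\bas$ exhibit $\G$ as a quotient of $\P_\bas$, and invoke symmetrization to pass back to the full operad $\P_\bas^!$.

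The main obstacle is pinning down the sign conventions in the quadratic pairing. The relative sign between the two binary tree shapes (together with the accompanying sign twist on $E^\vee$) is precisely what dictates that the two extra relations read $\bas\circ(\bas,I)=0$ and $m\circ(I,\bas)=0$ rather than some other pair; I would fix it unambiguously by calibrating against the self-duality of $\mathrm{Ass}$, or directly from Markl's definition, before trusting the output of the linear-algebra step.
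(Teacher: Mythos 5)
Your proposal is correct and follows essentially the same route as the paper: write $\P_\bas=\P(E)/(R)$, use the dimension count $\dim R^\perp=48-18=30$ (equivalently $8-3=5$ in the planar picture, since everything is induced from the non-$\Sigma$ setting), and identify $R^\perp$ via the quadratic pairing --- your explicit solution of the orthogonality conditions, with the sign calibration against the self-duality of the associative operad, indeed reproduces exactly the five stated relations. The only difference is presentational: the paper verifies that the five given relations lie in $R^\perp$ and span free $S_3$-modules in direct sum, whereas you derive them by solving the linear system, which amounts to the same computation.
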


\begin{proof} Let $\P(E)$ be the operad freely generated by the $S_2$-module freely generated by $m$ and $\bas$. 
Then $\P_\bas$ can be written $\P_\bas=\P(E)/(R)$, where $R$ is a sub-$S_3$-module of $\P(E)(3)$. As $\dim(\P(E))=48$ and
$\dim(\P_\bas(3))=30$, $\dim(R)=18$. So $\dim(R^\perp)=48-18=30$. We then verify that the given relations for $\G$ are indeed in $R^\perp$,
that each of them generates a free $S_3$-module, which are in direct sum. So these relations generate entirely $\P(E)(3)$. \end{proof}\\

{\bf Remarks.} \begin {enumerate}
\item So $\G$ is a quotient of $\P_\bas$.
\item Moreover, $\G$ is the symmetrisation of the non-$\Sigma$-operad $\tilde{\P}_\bas^!$ generated by $m$ and $\bas$ and the relations:
$$ \left\{ \begin{array}{rcl}
\bas\circ(m,I)&=&\bas\circ(I,\bas),\\
m\circ(m,I)&=&m\circ(I,m),\\
m\circ(\bas,I)&=&\bas\circ(I,m),\\
\bas\circ(\bas,I)&=&0,\\
m\circ(I,\bas)&=&0.
\end{array}\right.$$
This is a general fact: the Koszul dual of the symmetrisation of a quadratic non-$\Sigma$ operad is itself the symmetrisation 
of a certain quadratic non-$\Sigma$-operad.
\item Graphically, the relations defining $\G$ can be written in the following way:
$$\bddtroisun{$\bas $}{$m$}{$1$}{$2$}{$3$}=\bddtroisdeux{$\bas $}{$\bas $}{$1$}{$2$}{$3$},\hspace{1cm}
\bddtroisun{$m$}{$m$}{$1$}{$2$}{$3$}=\bddtroisdeux{$m$}{$m$}{$1$}{$2$}{$3$},\hspace{1cm}
\bddtroisun{$m$}{$\bas $}{$1$}{$2$}{$3$}=\bddtroisdeux{$\bas $}{$m$}{$1$}{$2$}{$3$},$$
$$\bddtroisun{$\bas $}{$\bas $}{$1$}{$2$}{$3$}=0,\hspace{1cm}\bddtroisdeux{$m$}{$\bas $}{$1$}{$2$}{$3$}=0.$$ 
\end{enumerate}

\subsection{Free $\G$-algebras}

Let $V$ be finite-dimensional vector space. We put:
$$\left\{ \begin{array}{rcl}
T_\bas(V)(n)&=&\displaystyle \bigoplus_{k=1}^n V^{\otimes n} \mbox{ for all }n \geq 1,\\
T_\bas(V)&=&\displaystyle \bigoplus_{n=1}^{\infty} T_\bas(V)(n).
\end{array}\right.$$
In order to distinguish the different copies of $V^{\otimes n}$, we put:
$$T(V)(n)=\bigoplus_{k=1}^n \underbrace{\left(A\otimes \ldots \otimes  A\otimes \dot{A}\otimes A \otimes \ldots \otimes A\right)}_{
\mbox{the $k$-th copy of $A$ is pointed.}}.$$ 
The elements of $A\otimes \ldots \otimes  A\otimes \dot{A}\otimes A \otimes \ldots \otimes A$ will be denoted by
$v_1\otimes \ldots \otimes v_{k-1} \otimes \dot{v}_k \otimes v_{k+1}\otimes \ldots \otimes v_n.$
We define $m$ and $\bas$ over $T_\bas(V)$ in the following way: for $v=v_1\otimes \ldots \otimes \dot{v}_k \otimes \ldots \otimes v_m$ 
and $w=w_1\otimes \ldots \otimes \dot{w}_l \otimes \ldots \otimes w_n$,
\begin{eqnarray*}
vw&=& \left\{ \begin{array}{l}
0\mbox{ if } l\neq 1, \\
v_1\otimes \ldots \otimes \dot{v}_k \otimes \ldots \otimes v_m \otimes
w_1\otimes \ldots \otimes w_n \mbox{ if } l=1;\\
\end{array} \right.\\ \\
v \bas w&=& \left\{ \begin{array}{l}
0\mbox{ if } k\neq 1, \\
v_1\otimes \ldots \otimes v_m \otimes
w_1\otimes \ldots \otimes \dot{w}_l \otimes \ldots \otimes w_n\mbox{ if } k=1.
\end{array}\right.
\end{eqnarray*}

\begin{lemma}
$T_\bas(V)$ is a $\G$-algebra generated by $V$. 
\end{lemma}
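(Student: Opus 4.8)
The plan is to verify directly that the two bilinear operations $m$ and $\bas$ defined on $T_\bas(V)$ satisfy the five defining relations of $\G$ from Theorem~\ref{2}, and then to show that the whole space is recovered from $V=T_\bas(V)(1)$ under these operations. Throughout, the only data attached to a pure tensor are its length and the position of its pointer, so the entire argument reduces to tracking how each operation transforms this pair and when it vanishes. The crucial bookkeeping is: $m(x,y)$ is nonzero only when $y$ is pointed in its first slot, and then the pointer of the result is that of $x$; dually, $x\bas y$ is nonzero only when $x$ is pointed in its first slot, and then the pointer of the result is that of $y$ shifted by the length of $x$.

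With this dictionary I would dispatch the three "genuine" relations first. The associativity $m\circ(m,I)=m\circ(I,m)$, together with the two mixed relations $\bas\circ(m,I)=\bas\circ(I,\bas)$ and $m\circ(\bas,I)=\bas\circ(I,m)$, each reduce to checking that both sides are supported exactly when the appropriate factors are pointed in position $1$ and that they carry the same output pointer; these are short case checks on the length/pointer pair. For the two vanishing relations the argument is conceptual rather than computational: since the left factor of a $\bas$-product has length at least $1$, the pointer of $x\bas y$ always sits in position at least $2$, so it can never serve as a valid left argument of a further $\bas$ (forcing $\bas\circ(\bas,I)=0$) nor as a valid right argument of $m$ (forcing $m\circ(I,\bas)=0$). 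This establishes that $T_\bas(V)$ is a $\G$-algebra.

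Then I would prove generation by induction on the length $n$, writing $e_k^{(n)}=v_1\otimes\cdots\otimes\dot{v}_k\otimes\cdots\otimes v_n$ for a typical basis vector of the $k$-th copy. The base case $n=1$ is immediate since $e_1^{(1)}=\dot{v}_1\in V$. For $n\geq 2$ there are two subcases, each peeling off the first tensor factor and lowering the length by one: if $k=1$ then $e_1^{(n)}=m(\dot{v}_1,\,\dot{v}_2\otimes v_3\otimes\cdots\otimes v_n)$, and if $k\geq 2$ then $e_k^{(n)}=\dot{v}_1\bas(v_2\otimes\cdots\otimes\dot{v}_k\otimes\cdots\otimes v_n)$, where the shift in the $\bas$-product places the pointer back at position $k$. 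In both cases the second argument lies in $T_\bas(V)(n-1)$ and is generated by the inductive hypothesis, while $\dot{v}_1\in V$; hence $e_k^{(n)}$ lies in the sub-$\G$-algebra generated by $V$. Since the $e_k^{(n)}$ span $T_\bas(V)$ and the operations are bilinear, $V$ generates the whole algebra.

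I expect the only delicate point to be the bookkeeping of pointer positions, and in particular the shift by the left length in the $\bas$-product: it is exactly this shift that simultaneously forces the two vanishing relations and allows the induction to steer the pointer into any prescribed slot. Once that shift is handled correctly, the remaining verifications are routine.
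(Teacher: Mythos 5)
Your proposal is correct and follows essentially the same route as the paper: a direct case check of the five relations of $\G$ via the length/pointer bookkeeping (your ``pointer dictionary'' is exactly the content of the paper's explicit computations, including the observation that a nonzero $x\bas y$ has its pointer in position at least $2$, which kills the two vanishing relations), followed by exhibiting every basis tensor as built from $V$. The only cosmetic difference is that the paper writes the closed formula $v_1\otimes\cdots\otimes\dot v_k\otimes\cdots\otimes v_n=(\dot v_1\cdots\dot v_{k-1})\bas(\dot v_k\cdots\dot v_n)$ where you peel off one factor at a time by induction on $n$; both are valid.
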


\begin{proof} Let us first show that the relations of the $\G$-algebras are satisfied. 
Let $u=u_1 \otimes \ldots \otimes \dot{u}_j \otimes \ldots \otimes u_m$, $v=v_1\otimes \ldots \otimes \dot{v}_k \otimes \ldots \otimes v_n$ and 
$w=w_1\otimes \ldots \otimes \dot{w}_l \otimes \ldots \otimes w_p$.
\begin{eqnarray*}
(uv)\bas w&=&0 \mbox{ if $j \neq 1$ or $k \neq 1$},\\
&=&u_1\otimes \ldots \otimes u_m \otimes v_1 \otimes \ldots \otimes v_n \otimes w_1 \otimes 
\ldots \otimes \dot{w}_l \otimes \ldots \otimes w_p \mbox{ if $j=k=1$},\\
u \bas (v \bas w)&=&0 \mbox{ if $j \neq 1$ or $k \neq 1$},\\
&=&u_1\otimes \ldots \otimes u_m \otimes v_1 \otimes \ldots \otimes v_n \otimes w_1 \otimes 
\ldots \otimes \dot{w}_l \otimes \ldots \otimes w_p \mbox{ if $j=k=1$},\\ \\
(uv)w&=&0\mbox{ if $k\neq 1$ or $l\neq 1$},\\
&=&u_1 \otimes \ldots \otimes \dot{u}_j \ldots \otimes u_m \otimes 
v_1 \otimes \ldots \otimes v_n \otimes w_1 \otimes \ldots \otimes w_p \mbox{ if $k=l=1$},\\
u(vw)&=&0\mbox{ if $k\neq 1$ or $l\neq 1$},\\
&=&u_1 \otimes \ldots \otimes \dot{u}_j \ldots \otimes u_m \otimes 
v_1 \otimes \ldots \otimes v_n \otimes w_1 \otimes \ldots \otimes w_p \mbox{ if $k=l=1$},\\ \\
(u \bas v)w&=&0\mbox{ if $j\neq 1$ or $l\neq 1$},\\
&=&u_1 \otimes \ldots \otimes u_m \otimes v_1 \otimes \ldots \otimes \dot{v}_k \ldots \otimes v_n \otimes 
w_1 \otimes \ldots \otimes w_p \mbox{ if $j=l=1$},\\
u \bas (vw)&=&0\mbox{ if $j\neq 1$ or $l\neq 1$},\\
&=&u_1 \otimes \ldots \otimes u_m \otimes v_1 \otimes \ldots \otimes \dot{v}_k \ldots \otimes v_n \otimes 
w_1 \otimes \ldots \otimes w_p \mbox{ if $j=l=1$},\\ \\
(u \bas v) \bas w&=&0,\\ \\
u(v \bas w)&=&0.
\end{eqnarray*}
So $(T_\bas(V),m,\bas)$ is a $\G$-algebra. Moreover, for all $v_1,\ldots,v_n \in V$:
$$v_1\otimes \ldots \otimes \dot{v}_k \otimes \ldots \otimes v_n=(\dot{v}_1\ldots \dot{v}_{k-1})\bas (\dot{v}_k \ldots \dot{v}_n).$$
Hence, $T_\bas(V)$ is generated by $V$. \end{proof}\\

The $\G$-algebra $T_\bas(V)$ is also graded by putting $V$ in degree $1$. 
It is then a quotient of the free $\G$-algebra generated by $V$, which is:
$$\bigoplus_{n=0}^{\infty} \tilde{\P}_\bas^!(n) \otimes V^{\otimes n}.$$
So, for all $n \in \mathbb{N}$, $\dim(\tilde{\P}_\bas^!(n) \otimes V^{\otimes n})\geq \dim(T_\bas(V)(n))$,
so $\dim(\tilde{\P}_\bas^!(n))\geq n$ and $\dim(\G(n))\geq n n!$.

\begin{lemma}
For all $n \in \mathbb{N}$, $dim(\G(n))\leq n n!$.
\end{lemma}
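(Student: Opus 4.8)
The plan is to reduce the statement to a bound on the underlying non-$\Sigma$-operad and then exhibit an explicit spanning set of the correct size. Since $\G$ is the symmetrisation of $\tilde{\P}_\bas^!$, one has $\dim(\G(n)) = n!\,\dim(\tilde{\P}_\bas^!(n))$, so it suffices to prove $\dim(\tilde{\P}_\bas^!(n)) \le n$. Recall that $\tilde{\P}_\bas^!(n)$ is a quotient of the space spanned by planar binary trees with $n$ leaves whose internal vertices are labelled by $m$ or $\bas$; I will show that the five relations force every such monomial to be either $0$ or one of $n$ distinguished elements.

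First I would introduce the candidate normal forms. Write $\mu_k \in \tilde{\P}_\bas^!(k)$ for the $k$-fold iterated product built from $m$, with $\mu_1 = I$; this is well defined thanks to the associativity relation $m\circ(m,I)=m\circ(I,m)$. Then set $\omega_1 = \mu_n$ and, for $2 \le k \le n$, $\omega_k = \bas\circ(\mu_{k-1},\mu_{n-k+1})$. This gives exactly $n$ elements, and they are precisely the operations realising the basis of $T_\bas(V)(n)$ through the identity $v_1\otimes\cdots\otimes\dot v_k\otimes\cdots\otimes v_n = (\dot v_1\cdots\dot v_{k-1})\bas(\dot v_k\cdots\dot v_n)$ from the previous lemma, which is how one guesses them.

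The heart of the argument is an induction on the number of internal vertices showing that every monomial of arity $n$ equals $0$ or some $\omega_k$. A monomial is $\theta\circ(L,R)$ with $\theta\in\{m,\bas\}$, where $L$ and $R$ are monomials of arities $p$ and $q=n-p$; by the inductive hypothesis each of $L,R$ is $0$ or a normal form, so it is enough to evaluate $\theta\circ(\omega_i^{(p)},\omega_j^{(q)})$. I would establish four composition rules: $m\circ(\omega_i^{(p)},\omega_j^{(q)})$ equals $\omega_i$ when $j=1$ and $0$ when $j\ge 2$; and $\bas\circ(\omega_i^{(p)},\omega_j^{(q)})$ equals $\omega_{p+j}$ when $i=1$ and $0$ when $i\ge 2$. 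The vanishing cases are immediate from the two ``killing'' relations $m\circ(I,\bas)=0$ and $\bas\circ(\bas,I)=0$, applied after operadic substitution (when $j\ge 2$ the right factor has $\bas$ at its root, and when $i\ge 2$ the left factor has $\bas$ at its root). The case $j=1$ of the first rule uses associativity of $m$ when $i=1$, and the relation $m\circ(\bas,I)=\bas\circ(I,m)$ to pull the single $\bas$ up to the root when $i\ge 2$. The case $i=1$ of the second rule is the definition of $\omega_{p+1}$ when $j=1$, and for $j\ge 2$ it follows from $\bas\circ(m,I)=\bas\circ(I,\bas)$, read as $\bas(x_1,\bas(x_2,x_3))=\bas(m(x_1,x_2),x_3)$, which merges the left factor $\mu_p$ with the inner $\bas$ into $\bas(\mu_{p+j-1},\mu_{q-j+1})=\omega_{p+j}$.

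Granting these rules, the induction shows that $\{\omega_1,\dots,\omega_n\}$ spans $\tilde{\P}_\bas^!(n)$, whence $\dim(\tilde{\P}_\bas^!(n))\le n$ and $\dim(\G(n))\le n\,n!$. The main obstacle is the careful verification of the composition rules: one must check that, after operadic substitution of the iterated products $\mu_\bullet$ into the three-input relations, every non-normal monomial either collapses through the two killing relations or can be rewritten with a single $\bas$ sitting at the root over two $m$-combs. The position bookkeeping in the non-$\Sigma$ setting, and in particular the merging step coming from $\bas\circ(m,I)=\bas\circ(I,\bas)$, is where all the content lies, the induction itself being routine.
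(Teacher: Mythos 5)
Your proposal is correct and follows essentially the same route as the paper: both reduce an arbitrary decorated binary tree, via the first four relations and then the killing relations, to one of $n$ normal forms per leaf-labelling (your $\omega_k=\bas\circ(\mu_{k-1},\mu_{n-k+1})$ is exactly the paper's comb with $i=k$ after applying $\bas\circ(m,I)=\bas\circ(I,\bas)$). Your inductive verification of the four composition rules simply makes explicit the rewriting that the paper states in two lines.
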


\begin{proof} $\G(n)$ is linearly generated by the binary trees with $n$ indexed leaves, whose internal vertices are decorated by $m$ and $\bas$.
 By the four first relations of $\G$, we obtain that $\G(n)$ is generated by the trees of the following form: 
$$\arbreun,$$
with $\sigma \in S_n$, $a_1,\ldots a_{n-1} \in \{m,\bas\}$. With the last relation, we deduce that $\G(n)$ is generated by the trees 
of the following form:
$$\arbredeux,$$
where $\sigma \in S_n$, $1 \leq i \leq n$. Hence, $dim(\G(n))\leq nn!$. \end{proof} \\

As a consequence:

\begin{theo}
Let $n \geq 1$.
\begin{enumerate}
\item $dim(\G(n))=nn!$.
\item $\G(n)$ is freely generated, as a $S_n$-module, by the following trees:
$$\arbretrois,$$
where $1 \leq i \leq n$.
\item $T_\bas(V)$ is the free $\G$-algebra generated by $V$.
\end{enumerate}
\end{theo}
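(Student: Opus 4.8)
The plan is to obtain all three statements simultaneously from a single dimension squeeze, combining the lower bound packaged in the first lemma (which exhibits $T_\bas(V)$ as a $\G$-algebra generated by $V$) with the upper bound of the second lemma. The point is that all the genuine content already lives in those two lemmas; the theorem merely records that their bounds coincide and reads off the consequences.

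First I would pin down the exact value of $\dim(\G(n))$. Since $T_\bas(V)$ is a $\G$-algebra generated by $V$, it is a graded quotient of the free $\G$-algebra $\bigoplus_{n} \tilde{\P}_\bas^!(n)\otimes V^{\otimes n}$. Comparing dimensions in degree $n$ gives $\dim(\tilde{\P}_\bas^!(n))\cdot(\dim V)^n \geq \dim(T_\bas(V)(n)) = n\,(\dim V)^n$, hence $\dim(\tilde{\P}_\bas^!(n))\geq n$, and after multiplying by $n!$ for the symmetrisation, $\dim(\G(n))\geq n\,n!$. The second lemma supplies the reverse inequality $\dim(\G(n))\leq n\,n!$. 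Together these prove part 1, namely $\dim(\G(n))=n\,n!$ (equivalently $\dim(\tilde{\P}_\bas^!(n))=n$).

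For part 2 I would revisit the normal form obtained in the proof of the second lemma: $\G(n)$ is linearly spanned by the images under $S_n$ of the $n$ standard trees displayed in the statement, i.e. by the set $\{\sigma\cdot T_i : \sigma\in S_n,\ 1\leq i\leq n\}$, where $T_i$ denotes the standard tree indexed by $i$. This spanning set has at most $n\,n!$ elements. Since part 1 gives $\dim(\G(n))=n\,n!$, the spanning set must consist of exactly $n\,n!$ pairwise distinct, linearly independent elements; in particular the $S_n$-action on $\{T_1,\ldots,T_n\}$ has no coincidences and is free, so $\G(n)\cong k[S_n]^{\oplus n}$ is freely generated as an $S_n$-module by $T_1,\ldots,T_n$. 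Finally, part 3 is just the equality case of the surjection used for the lower bound: with $\dim(\tilde{\P}_\bas^!(n))=n$ now established, the free $\G$-algebra on $V$ has dimension $n\,(\dim V)^n = \dim(T_\bas(V)(n))$ in each degree, so the canonical surjection onto $T_\bas(V)$ is a degreewise isomorphism, which is exactly the assertion that $T_\bas(V)$ is free.

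The only place needing care is the bookkeeping of the two passages between the non-$\Sigma$-operad $\tilde{\P}_\bas^!$ and its symmetrisation $\G$: the factor $n!$ relating $\dim(\G(n))$ to $\dim(\tilde{\P}_\bas^!(n))$, and the identification of the free $\G$-algebra with $\bigoplus_n \tilde{\P}_\bas^!(n)\otimes V^{\otimes n}$ (so that $\dim$ in degree $n$ is $\dim(\tilde{\P}_\bas^!(n))\cdot(\dim V)^n$). Once those are in place the argument is a pure squeeze requiring no further computation, the upper and lower bounds forcing simultaneously the dimension formula, the freeness of the $S_n$-action, and the freeness of $T_\bas(V)$.
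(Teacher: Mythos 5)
Your proof is correct and follows essentially the same route as the paper: the paper states this theorem with the phrase ``As a consequence'' of the two preceding lemmas, i.e.\ exactly the dimension squeeze you describe, combining the lower bound $\dim(\G(n))\geq n\,n!$ extracted from the generation of $T_\bas(V)$ by $V$ with the upper bound $\dim(\G(n))\leq n\,n!$ from the spanning set of trees. Your added bookkeeping on the symmetrisation factor $n!$ and the equality case of the surjection matches the intended argument.
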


\subsection{Homology of a $\P_\bas$-algebra}

Let us now describe the cofree $\P_\bas$-algebra cogenerated by $V$. By duality, it is equal to $T_\bas(V)$ as a vector space, 
with coproducts given in the following way: for $v=v_1 \otimes \ldots \otimes \dot{v}_k \otimes \ldots \otimes v_m$,
\begin{eqnarray*}
\Delta(v)&=&\sum_{i=k}^{m-1} (v_1 \otimes \ldots \otimes \dot{v}_k \otimes \ldots \otimes v_i) \otimes (\dot{v}_{i+1}\otimes \ldots \otimes v_m),\\
\Delta_\bas(v)&=&\sum_{i=1}^{k-1} (\dot{v}_1\otimes \ldots \otimes v_i)\otimes (v_{i+1}\otimes \ldots \otimes \dot{v}_k \otimes \ldots \otimes v_m).
\end{eqnarray*}

Let $A$ be a $\P_\bas$-algebra. The homology complex of $A$ is given by the shifted cofree coalgebra $T_\bas(V)[-1]$,
with differential $d:T_\bas(V)(n) \longrightarrow T_\bas(V)(n-1)$, uniquely determined by the following conditions:
\begin{enumerate}
\item for all $a,b \in A$, $d(\dot{a} \otimes b)=ab$.
\item for all $a,b \in A$, $d(a\otimes \dot{b})=a\bas b$.
\item Let $\theta: T_\bas(A) \longrightarrow T_\bas(A)$ be the following application:
$$ \theta: \left\{\begin{array}{rcl}
T_\bas(A) & \longrightarrow &T_\bas(A)\\
x& \longrightarrow &(-1)^{degree(x)}x \mbox{ for all homogeneous $x$}.
\end{array}\right. $$
Then $d$ is a $\theta$-coderivation: for all $x\in T_\bas(A)$,
\begin{eqnarray*}
\Delta(d(x))&=&(d\otimes Id +\theta \otimes Id) \circ \Delta(x),\\
\Delta_\bas(d(x))&=&(d\otimes Id +\theta \otimes Id) \circ \Delta_\bas(x).
\end{eqnarray*} \end{enumerate}
So, $d$ is the application which sends the element $v_1 \otimes \ldots \otimes \dot{v}_k \otimes \ldots \otimes v_m$ to:
\begin{eqnarray*}
&&\sum_{i=1}^{k-2} (-1)^{i-1} v_1 \otimes \ldots \otimes v_i v_{i+1} 
\otimes \ldots \otimes \dot{v}_k \otimes \ldots \otimes v_m\\
&&+(-1)^{k-2} v_1 \otimes \ldots \otimes \overbrace{v_{k-1} \bas v_k}^{.} \otimes \ldots \otimes v_m\\
&&+(-1)^{k-1} v_1 \otimes \ldots \otimes \overbrace{v_k v_{k+1}}^{.} \otimes \ldots \otimes v_m\\
&&+\sum_{i=k+1}^{n-1} (-1)^{i-1} v_1 \otimes \ldots \otimes \dot{v}_k \otimes \ldots \otimes v_i v_{i+1} \otimes \ldots \otimes v_m.
\end{eqnarray*}
The homology of this complex will be denoted by $H_*^\bas(A)$. More clearly, for all $n \in \mathbb{N}$:
$$H_n^\bas(A)=\frac{Ker\left(d_{\mid T_\bas(A)(n+1)}\right)}{Im\left(d_{\mid T_\bas(A)(n+2)}\right)}.$$

{\bf Examples.} Let $v_1,v_2,v_3 \in A$.
$$\left\{ \begin{array}{rcl}
d(v_1)&=&0,\\
d(\dot{v_1}\otimes v_2)&=&v_1v_2,\\
d(v_1\otimes \dot{v_2})&=&v_1\bas v_2,\\
d(\dot{v_1}\otimes v_2\otimes v_3)&=&\overbrace{v_1v_2}^{.}\otimes v_3- \dot{v_1}\otimes v_2v_3,\\
d(v_1\otimes \dot{v_2}\otimes v_3)&=&\overbrace{v_1\bas v_2}^{.}\otimes v_3- v_1\otimes \overbrace{v_2v_3}^{.},\\
d(v_1\otimes v_2\otimes \dot{v_3})&=&v_1v_2\otimes \dot{v_3}- v_1\otimes \overbrace{v_2\bas v_3}^{.}.
\end{array}\right.$$
So:
$$\left\{ \begin{array}{rcl}
d^2(\dot{v_1}\otimes v_2\otimes v_3)&=&(v_1v_2)v_3- v_1(v_2v_3),\\
d^2(v_1\otimes \dot{v_2}\otimes v_3)&=&(v_1\bas v_2) v_3- v_1\bas (v_2v_3),\\
d^2(v_1\otimes v_2\otimes \dot{v_3})&=&(v_1v_2)\bas v_3- v_1\bas (v_2\bas v_3).
\end{array}\right.$$
So the nullity of $d^2$ on $T_\bas(A)(3)$ is equivalent to the three relations defining $\P_\bas$-algebras (this is a general fact \cite{Ginzburg}).
 In particular:
$$H_0^\bas(A)=\frac{A}{A.A+A\bas A}.$$

\subsection{Homology of free $\P_\bas$-algebras}

The aim of this paragraph is to prove the following result:

\begin{theo}
let $N \geq 1$ and let $A$ be the free $\P_\bas$-algebra generated by $D$ elements. 
Then $H_0^\bas(A)$ is $D$-dimensional; if $n \geq 1$, $H_n^\bas(A)=(0)$.
\end{theo}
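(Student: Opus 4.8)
The plan is to exhibit a direct sum decomposition of the complex $(T_\bas(A),d)$ indexed by the basis forests of $A$, and to recognise each summand as a tensor product of two elementary, easily understood complexes.

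Since $A$ is the free $\P_\bas$-algebra on $\D$, its basis is $\F^\D$, and a basis of $T_\bas(A)(n)$ is given by the marked tuples $v=v_1\otimes\ldots\otimes\dot v_k\otimes\ldots\otimes v_n$ of nonempty forests. First I would define $\Phi(v)\in A$ to be the product $(v_1\cdots v_{k-1})\bas(v_k\cdots v_n)$, with the convention that the left factor is absent (so $\Phi(v)=v_1\cdots v_n$) when $k=1$. Using the three defining relations of $\P_\bas$ — associativity of $m$, and the two grafting relations $\bas\circ(I,m)=m\circ(\bas,I)$ and $\bas\circ(m,I)=\bas\circ(I,\bas)$ — one checks that every term occurring in $d(v)$ has the same value of $\Phi$ as $v$: an $m$-merge inside the left block or inside the right block leaves both products unchanged, while the single $\bas$-merge at the gap immediately left of the marked entry is absorbed by the two grafting relations. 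Hence $d$ preserves the fibres of $\Phi$, and $T_\bas(A)=\bigoplus_{G\in\F^\D}C^{(G)}$ as complexes, where $C^{(G)}=\langle v:\Phi(v)=G\rangle$.

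Next I would describe $\Phi^{-1}(G)$ combinatorially. Writing $G=s_1\cdots s_q$ for its top-level trees and $w_1\cdots w_a$ for the children of the root of $s_1$, solving $\Phi(v)=G$ shows that a marked tuple in the fibre is exactly the data of a subset $S_R\subseteq\{1,\ldots,q-1\}$ (the cuts among the top-level trees, making up the right block) together with a subset $T\subseteq\{1,\ldots,a\}$ (the leftmost children of $s_1$ that are peeled off, with $\max T$ recording the position of the grafting $\bas$); here $T=\varnothing$ is the case $k=1$. One then computes that the homological degree is $|S_R|+|T|$, that the $q-1$ top-level gaps are available independently of $T$, and that $d$ acts as the signed ``remove one element'' operator on $S_R$ and on $T$ separately: removing an element of $S_R$, or a non-maximal element of $T$, is an $m$-merge, whereas the $\bas$-merge removes precisely $\max T$, the previous second-largest element becoming the new grafting position. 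Verifying this last identity is the heart of the matter and is exactly what the two grafting relations give.

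It follows that $C^{(G)}$ is, up to Koszul signs, the tensor product of the two ``subset'' complexes on $\{1,\ldots,q-1\}$ and $\{1,\ldots,a\}$, each of which (all subsets of an $r$-element set, graded by cardinality, with the remove-an-element differential) is acyclic for $r\ge 1$ and equals $\mathbb{k}$ in degree $0$ for $r=0$. By Künneth, $C^{(G)}$ is therefore acyclic unless $q=1$ and $a=0$, i.e. unless $G$ is the one-vertex tree \tdun{$d$}, in which case $H_*(C^{(G)})$ is one-dimensional and concentrated in degree $0$. Summing over $G$ yields $H_n^\bas(A)=(0)$ for $n\ge 1$ and $H_0^\bas(A)\cong\bigoplus_{d\in\D}\mathbb{k}$, which is $D$-dimensional. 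The main obstacle is the bookkeeping of the previous paragraph: pinning down the fibre $\Phi^{-1}(G)$ precisely, matching the $\bas$-merge to the face map $T\mapsto T\setminus\{\max T\}$, and tracking the signs so that the identification of $C^{(G)}$ with the tensor product of the two subset complexes is genuinely an isomorphism of complexes.
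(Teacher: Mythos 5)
Your proof is correct, and it takes a genuinely different route from the paper's. The paper filters $T_\bas(A)(n)$ by the position of the marked factor, identifies the bottom layer $C_*^{\leq 1}$ with the shifted Hochschild complex of the free associative algebra $(A,m)$, climbs the filtration by induction on that position (with an auxiliary decreasing induction on weights for the top layer $k=n$), and then disposes of $H_1^\bas$ separately through an explicit five-case computation on $C_2$. You instead split the entire complex in one stroke as a direct sum over basis forests $G\in\F^\D$ of the fibres of $v_1\otimes\ldots\otimes\dot v_k\otimes\ldots\otimes v_n\mapsto(v_1\cdots v_{k-1})\bas(v_k\cdots v_n)$, and the verifications you flag as the crux do all hold: the three relations of $\P_\bas$ force every term of $d(v)$ into the same fibre (the $\bas$-merge term uses both grafting relations, the $m$-merge terms only associativity); for $G=s_1\cdots s_q$ with $s_1=B_d(w_1\cdots w_a)$ the fibre is exactly parametrised by pairs $(T,S_R)$ with $T\subseteq\{1,\ldots,a\}$, $S_R\subseteq\{1,\ldots,q-1\}$, sitting in homological degree $|T|+|S_R|$; the $\bas$-merge deletes $\max T$ because $F\bas B_d(H)=B_d(FH)$; and the sign $(-1)^{i-1}$ on the $i$-th gap is precisely the Koszul sign for the tensor product of the left-block subset complex (on $a$ letters) with the right-block subset complex (on $q-1$ letters), taken in that order. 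Each factor is the augmented simplex complex, acyclic unless the number of letters is $0$, so only the single roots contribute, in degree $0$. What your approach buys is uniformity and transparency: no appeal to Hochschild homology of free algebras, no separate argument for $H_1$, and the identification of $H_0^\bas(A)$ with the $D$-dimensional span of the one-vertex trees drops out of the same computation; it also adapts readily to $\P_\haut$ using the factorisation lemma of the last section. What it costs is the combinatorial bookkeeping you identify, which does go through as you describe.
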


\begin{proof} {\it Preliminaries.} We put, for all $k,n \in \mathbb{N}^*$:
$$\left\{ \begin{array}{rcl}
C_n&=&T_\bas(A)(n),\\[2mm]
C_n^k&=& \underbrace{A\otimes \ldots \otimes  \dot{A} \otimes \ldots \otimes A}_{\mbox{$A$ in position $k$}} \subseteq C_n \mbox{ if } k\leq n,\\[8mm]
C_n^{\leq k}&=&\bigoplus_{i\leq k,n} C_n^i\subseteq C_n.
\end{array}\right.$$
For all $k \in \mathbb{N}^*$, $C_*^{\leq k}$ is a subcomplex of $C_*$. In particular, $C_*^{\leq 1}$ is isomorphic to the complex defined by
$C'_n=A^{\otimes n}$, with a differential defined by:
$$d' : \left\{\begin{array}{rcl}
A^{\otimes n}& \longrightarrow & A^{\otimes (n-1)}\\
v_1\otimes \ldots \otimes v_n & \longrightarrow &
\displaystyle \sum_{i=1}^{n-1} (-1)^{i-1} v_1\otimes \ldots \otimes v_{i-1} \otimes v_iv_{i+1}
\otimes v_{i+2}\otimes \ldots \otimes v_n.
\end{array}\right. $$
The homology of $C'_*$ is then the shifted Hochschild homology of $A$. As $A$ is a free (non unitary) associative algebra, 
this homology is concentrated in degree $1$. So, $Ker\left(d_{| C_n^{\leq 1}}\right)\subseteq Im(d)$ if $n \geq 2$.\\

{\it First step.} Let us fix $n \geq 2$ and let us show that $Ker\left(d_{| C_n^{\leq k}}\right)\subseteq Im(d)$ for all $1 \leq k \leq n-1$ 
by induction on $k$. For $k=1$, this is already done. Let us assume that $2\leq k<n$ and $Ker\left(d_{| C_n^{\leq k-1}}\right)\subseteq Im(d)$.
Let $x=\displaystyle \sum_{i=1}^{k} x_i\in Ker\left(d_{| C_n^{\leq k}}\right)$, with $x_i \in C_n^i$. If $x_k=0$, then 
$x \in Ker\left(d_{| C_n^{\leq k-1}}\right)$ by the induction hypothesis. Otherwise, we put:
$$x_k=\sum v_1\otimes \ldots \otimes \dot{v_k} \otimes \ldots \otimes v_n.$$
We project $d(x)$ over $C_{n-1}^k$. we obtain:
\begin{eqnarray*}
0&=&\sum_{i=1}^{k-1} \pi_k(d(x_i))+\sum\sum_{i=1}^{k-2} (-1)^{i-1} \pi_k(v_1 \otimes \ldots \otimes v_{i-1} \otimes v_iv_{i+1}
\otimes v_{i+2} \otimes \ldots \otimes \dot{v}_k \otimes \ldots \otimes v_n)\\
&&+\sum (-1)^{k-2} \pi_k( v_1\otimes \ldots \otimes v_{k-2} \otimes \overbrace{v_{k-1}\bas v_k}^{.}\otimes v_{k+1}\otimes \ldots \otimes v_n)\\
&&+\sum (-1)^{k-1} \pi_k(v_1\otimes \ldots  \otimes v_{k-1}\otimes \overbrace{v_k v_{k+1}}^{.}\otimes v_{k+2}\otimes \ldots \otimes v_n)\\
&&+\sum \sum_{i=k+1}^{n-1}(-1)^{i-1} \pi_k(v_1\otimes \ldots \otimes \dot{v}_k \otimes \ldots 
\otimes v_{i-1}\otimes v_iv_{i+1}\otimes v_{i+2}\otimes \ldots \otimes v_n)\\
&=&0+0+0+ \sum (-1)^{k-1} v_1\otimes \ldots  \otimes \otimes v_{k-1}
\otimes \overbrace{v_k v_{k+1}}^{.}\otimes v_{k+2}\otimes \ldots \otimes v_n\\
&&+\sum\sum_{i=k+1}^{n-1}(-1)^{i-1}  v_1\otimes \ldots \otimes \dot{v}_k \otimes \ldots 
\otimes v_{i-1}\otimes v_iv_{i+1}\otimes v_{i+2}\otimes \ldots \otimes v_n\\
&=&(-1)^{k-1} \sum v_1\otimes \ldots \otimes v_{k-1} \otimes d'(v_k\otimes \ldots \otimes v_n).
\end{eqnarray*}
Hence, we can suppose that $d'(v_k\otimes \ldots \otimes v_n)=0$. As $n-k+1\geq 2$ and the complex $C'_*$ is exact in degree $n-k+1\geq 2$,
there exists $\sum w_k \otimes \ldots \otimes w_{n+1} \in A^{\otimes (n-k+2)}$, such that:
$$d'\left(\sum w_k \otimes \ldots \otimes w_{n+1} \right)=v_k\otimes \ldots \otimes v_n.$$
We put $w=\sum v_1\otimes \ldots \otimes v_{k-1} \otimes \left(\sum\dot{w_k} \otimes \ldots \otimes w_{n+1}\right)$.
Then $d(w)=x_k+C_n^{k-1}$, so $x-d(w)\in C_n^{k-1}$. As $Im(d)\subseteq Ker(d)$, $x-d(w) \in Ker\left(d_{| C_n^{\leq k-1}}\right)\subseteq Im(d)$ 
by the induction hypothesis. So, $x \in Im(d)$.\\

{\it Second step.} Let us show that $Ker\left(d_{|C_n^{\leq n}}\right)\subseteq Im(d)$ if $n \geq 3$. Let $x \in Ker\left(d_{| C_n^{\leq n}}\right)$.
As before, we put $x=\displaystyle \sum_{i=1}^n x_i$, with $x_i \in C_n^i$ and:
$$x_n=\sum_i v^i_1\otimes \ldots \otimes \dot{v^i_k} \otimes \ldots \otimes v^i_n.$$
We can assume that the $v^i_j$'s are homogeneous. Let us fix an integer $N$, greater than the degree of $x_n$ and an integer $M$,
smaller than $\displaystyle\max_i\{weight(v^i_n)\}$. Let us show by decreasing induction the following property:
For all$x  \in Ker\left(d_{| C_n^{\leq n}}\right)$ of weight $\leq N$ and such that $\displaystyle\max_i\{weight(v^i_n)\}\geq M$, then $x \in Im(d)$.
If $M>N$, such an $x$ is zero and the result is trivial. Let us assume the property at rank $M+1$ and let us prove it at rank $M$.
Let $A_M$ be the homogeneous component (for the weight of forests) of degree $M$ of $A$. We project $d(x)$ over 
$A\otimes \ldots \otimes \dot{A_M}$. Then: 
$$0=\varpi_M(d(x))=\sum_{i,\: weight(v_n^i)=M} d'(v_1^i\otimes\ldots \otimes v_{n-1}^i) \otimes \dot{v_n^i}.$$
Hence, we can suppose that, for all $i$ such that $weight(v_n^i)=M$, $d'(v_1^i\otimes\ldots \otimes v_{n-1}^i)=0$. As $n \geq 3$ 
and $C'_*$ is exact at $n-1\geq 2$, there exists $\displaystyle \sum_j w^{i,j}_1 \otimes \ldots \otimes w^{i,j}_n \in A^{\otimes n}$ such that:
$$d'\left(\sum_j w^{i,j}_1 \otimes \ldots \otimes w^{i,j}_n \right)=v_1^i\otimes\ldots \otimes v_{n-1}^i.$$
As $d'$  is homogeneous for the weight, the weight of this element can be supposed smallest than the weight of 
$v_1^i\otimes\ldots \otimes v_{n-1}^i$. We then put $\displaystyle w=\sum_{i,\: poids(v_n^i)=M}  \left( \sum_j w^{i,j}_1 \otimes \ldots \otimes
w^{i,j}_n\right)\otimes \dot{v^i_n}$. So $x-d(w)$ is in $x \in Ker\left(d_{| C_n^{\leq n}}\right)$, with weight $\leq N$, 
and satisfies the property on the $v_n^i$'s for $M+1$. By induction hypothesis, $x-d(w)\in Im(d)$, so $x \in Im(d)$.\\

Hence, if  $n\geq 3$, $Ker\left(d_{| C_n^{\leq n}}\right) \subseteq Im(d)$. As $C_n^{\leq n}=C_n$, for all $n\geq 3$,
$d(C_{n+1})\subseteq Ker\left(d_{| C_n}\right) \subseteq d(C_{n+1})$. Consequently, if $n \geq 2$, $H_n^{\bas}(A)=(0)$.\\

{\it Third step.} We now compute $H_1^\bas(A)$. We take an element $x \in C_2$ and show that it belongs to $Im(d)$.
This $x$ can be written under the form:
$$x= \sum_{F,G\in \F^\D-\{1\}} a_{F,G} F \otimes \dot{G}-\sum_{F,G\in \F^\D-\{1\}} b_{F,G} \dot{F} \otimes G.$$
So:
$$d(x)= \sum_{F,G\in \F^\D-\{1\}} a_{F,G} F \bas G-\sum_{F,G\in \F-\{1\}} b_{F,G} FG.$$
Hence, the following assertions are equivalent:
\begin{enumerate}
\item $d(x)=0$.
\item For all $H\in \F^\D-\{1\}$, $\displaystyle  \sum_{F\bas G=H}a_{F,G} =\sum_{FG=H}b_{F,G}$.
\end{enumerate}

{\it First case.} For all $F,G\in \F^\D-\{1\}$, $a_{F,G}=0$, that is to say $x \in \dot{A}\otimes A$.
So $d(x)=d'(x')$. As $C'_*$ is exact in degree $2$, there exists $v_1\otimes v_2 \otimes v_3 \in A^{\otimes 3}$ such that
$\displaystyle d'(v_1\otimes v_2 \otimes v_3)=\sum_{F,G} b_{F,G} F\otimes G$.
Consequently, $\displaystyle d(\dot{v_1}\otimes v_2 \otimes v_3)=\sum_{F,G} b_{F,G} \dot{F}\otimes G=x$.\\

{\it Second case.} $x=F_1\otimes \dot{F_2}-\dot{G_1}\otimes G_2$, $F_1,F_2,G_1,G_2 \in \F^\D$, such that $F_1\bas F_2=G_1G_2=H$.
We put $H=t_1\ldots t_n$ and $t_1=B_d(s_1\ldots s_m)$, $t_1,\ldots,t_n,s_1,\ldots,s_m\in \T^\D$. There exists $i\in \{1,\ldots,n-1\}$
such that $G_1=t_1\ldots t_i$ and $G_2=t_{i+1}\ldots t_n$; there exists $j\in \{1,\ldots,m-1\}$ such that $F_1=s_1\ldots s_j$ and
$F_2=B_d(s_{j+1}\ldots s_m)t_2\ldots t_n$. Then:
\begin{eqnarray*}
&&d(s_1\ldots s_j \otimes \overbrace{B_d(s_{j+1}\ldots s_m)t_2\ldots t_i }^{.} \otimes t_{i+1} \ldots t_n)\\
&=&\overbrace{(s_1\ldots s_j) \bas B_d(s_{j+1}\ldots s_m)t_2\ldots t_i }^{.} \otimes t_{i+1} \ldots t_n\\
&&-s_1\ldots s_j \otimes \overbrace{B_d(s_{j+1}\ldots s_m)t_2\ldots t_i t_{i+1} \ldots t_n}^{.}\\
&=&\dot{G_1}\otimes G_2 -F_1 \otimes \dot{F_2}.
\end{eqnarray*}
So, $x \in Im(d)$.\\

{\it Third case.} We suppose now the following condition: 
$$(a_{F,G}\neq 0 )\:\Longrightarrow \: (G\notin \T^\D).$$ 
So, $x$ can be written:
$$x= \sum_{F,G\in \F^\D,\: t\in \T^\D} a_{F,tG} F\otimes \overbrace{tG}^{.}-\sum_{F,G\in \F^\D} b_{F,G} \dot{F}\otimes G.$$
By the second case, $F\otimes \overbrace{tG}^{.} - \overbrace{F\bas t}^{.}\otimes G \in Im(d)\subseteq Ker(d)$.
So the following element belongs to $Ker(d)$:
\begin{eqnarray*}
&&x-\sum_{F,G\in \F^\D,\: t\in \T^\D} a_{F,tG} (F\otimes \overbrace{tG}^{.} - \overbrace{F\bas t}^{.}\otimes G)\\
&=&- \sum_{F,G\in \F^\D} b_{F,G} \dot{F}\otimes G +\sum_{F,G\in \F^\D,\: t\in \T^\D} a_{F,tG}  \overbrace{F\bas t}^{.}\otimes G.
\end{eqnarray*}
By the first case, this element belongs to $Im(d)$, so $x \in Im(d)$.\\

{\it Fourth case.} We suppose now the following condition:
$$(a_{F,G}\neq 0 )\:\Longrightarrow \: (G\notin \T^\D \mbox{ ou } G=\tdun{$d$},\: d\in \D).$$
Let $H=B_d^+(t_1\ldots t_n) \in \T^\D$, different from a single root. Then:
$$0=\sum_{F\bas G=H}a_{F,G}-\sum_{FG=H}b_{F,G}=\sum_{i=1}^n a_{t_1\ldots t_i,B_d(t_{i+1}\ldots t_n)}-0
=a_{t_1\ldots t_n,\tdun{$d$}}+0=a_{F,\tdun{$d$}}.$$
Consequently, for all $F\in \F^\D$, $d\in \D$, $a_{F,\tdun{$d$}}=0$. By the third case, $x \in Im(d)$.\\

{\it General case.} The following element belongs to $Ker(d)$:
\begin{eqnarray*}
x'&=& x+\sum_{F,G \in \F^\D,\: d\in \D} a_{F,B_d(G)}d(F\otimes G \otimes \dot{\tdun{$d$}})\\
&=&x+\sum_{F,G \in \F^\D,\: d\in \D} a_{F,B_d(G)}FG \otimes \dot{\tdun{$d$}}
-\sum_{F,G \in \F^\D,\: d\in \D} a_{F,B_d(G)}F\otimes \overbrace{G\bas \tdun{$d$}}^{.}\\
&=&x+ \sum_{F,G \in \F^\D,\: d\in \D} a_{F,B_d(G)}FG \otimes \dot{\tdun{$d$}}-\sum_{F,G \in \F^\D,\: d\in \D} a_{F,B_d(G)}F\otimes \dot{B_d(G)}\\
&=& \sum_{F\in \F^\D,\:G \in \F^\D-\T^\D}a_{F,G} F\otimes \dot{G}+\sum_{F\in \F^\D,\:d\in \D}a_{F,G} F\otimes \dot{\tdun{$d$}}\\
&&-\sum_{F,G\in \F^\D} b_{F,G} \dot{F}\otimes G+\sum_{F,G \in \F^\D,\: d\in \D} a_{F,B_d(G)}FG \otimes \dot{\tdun{$d$}}.
\end{eqnarray*}
So $x'$ satisfies the condition of the fourth case, so $x'\in Im(d)$. Hence, $x\in Im(d)$.
This proves finally that $Ker(d_{|C_2})=d(C_3)$, so $H^\bas_1(A)=(0)$\\

It remains to compute $H_0^\bas(A)$. This is equal to $A/(A.A+A\bas A)$, so a basis of $H_0^\bas(A)$ is given by the trees of weight $1$, 
so $dim(H_0^\bas(A))=D$. \end{proof}\\

As an immediate corollary:

\begin{cor}
The operad $\P_\bas$ is Koszul.
\end{cor}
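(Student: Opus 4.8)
The plan is to invoke the homological criterion for Koszulness of Ginzburg and Kapranov \cite{Ginzburg}: a quadratic operad $\P$ is Koszul if and only if, for every free $\P$-algebra $A$, the associated operadic homology is concentrated in degree $0$. The complex computing $H_*^\bas(A)$ introduced above is precisely this operadic (Koszul) complex for $\P_\bas$: it is built on the cofree $\P_\bas$-coalgebra $T_\bas(V)$, which by duality is the graded dual of the free $\G$-algebra, and $\G=\P_\bas^!$ is the Koszul dual of $\P_\bas$ by Theorem \ref{2}. Thus the Koszulness of $\P_\bas$ is equivalent to the vanishing of $H_n^\bas(A)$ for all $n\geq 1$ whenever $A$ is free.

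First I would note that it suffices to test this vanishing on free $\P_\bas$-algebras, since for such algebras the operadic homology reduces to the Koszul complex of the operad itself and therefore detects Koszulness directly. Then I would simply appeal to the preceding theorem, which computes $H_*^\bas(A)$ for the free $\P_\bas$-algebra $A$ on $D$ generators and shows that $H_0^\bas(A)$ is $D$-dimensional while $H_n^\bas(A)=(0)$ for every $n\geq 1$. As $D$ is arbitrary, this is exactly the acyclicity demanded by the criterion, so the corollary follows at once.

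The conceptual work has all been carried out already in the proof of the theorem, so no genuine obstacle remains. The one point deserving attention is the identification of the complex $(T_\bas(V),d)$ with the standard operadic Koszul complex: one should confirm that the differential $d$, characterised above as a $\theta$-coderivation lifting the two products $m$ and $\bas$, agrees with the differential produced by the general Ginzburg--Kapranov formalism from the Koszul dual pairing $(\P_\bas,\G)$. This compatibility is built into the construction of the complex, and the nullity of $d^2$ on $T_\bas(A)(3)$ recovering exactly the three defining relations of a $\P_\bas$-algebra (noted above) is the concrete manifestation of it; it is a formal consequence of the duality in \cite{Ginzburg} and requires no further computation.
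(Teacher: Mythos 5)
Your proposal is correct and matches the paper's intent exactly: the corollary is stated as an immediate consequence of the preceding theorem, via the Ginzburg--Kapranov criterion that a quadratic operad is Koszul if and only if the homology of its free algebras is concentrated in degree $0$, which is precisely the argument you give. Your additional remark on identifying $(T_\bas(A),d)$ with the standard Koszul complex is a reasonable point of care, but it is already built into the paper's construction of the complex and requires nothing further.
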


\section{The operad $\P_\haut$ is Koszul}

\subsection{Koszul dual of $\P_\haut$}

We denote by $\H$ the Koszul dual of $\P_\haut$.

\begin{theo}
The operad $\H$ is generated by $m$ and $\haut \in \H(2)$, with the relations:
$$ \left\{ \begin{array}{rcl}
\haut\circ(\haut,I)&=&\haut\circ(I,\haut),\\
m\circ(m,I)&=&m\circ(I,m),\\
m\circ(\haut,I)&=&\haut\circ(I,m),\\
\haut\circ(m,I)&=&0,\\
m\circ(I,\haut)&=&0.
\end{array}\right.$$
\end{theo}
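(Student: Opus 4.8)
The plan is to mimic, essentially verbatim, the proof of Theorem~\ref{2}, replacing $\P_\bas$ by $\P_\haut$ and $\bas$ by $\haut$ throughout; the two presentations are formally symmetric, so no new idea is needed, only a re-run of the same dimension count and pairing computation. Let $\P(E)$ be the operad freely generated by the $S_2$-module freely generated by $m$ and $\haut$, and write $\P_\haut=\P(E)/(R)$, where $R$ is a sub-$S_3$-module of $\P(E)(3)$. By definition (see \cite{Ginzburg,Markl}) the Koszul dual $\H$ is generated by $m$ and $\haut$ with relation space $R^\perp\subseteq\P(E)(3)$, the annihilator of $R$ for the canonical (Koszul-sign-twisted) quadratic pairing on $\P(E)(3)$.

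First I would fix the dimensions. Since $E$ carries two binary generators, $\P(E)(3)$ is spanned by the $3!$ leaf-labellings of the two shapes of planar binary trees with three leaves, each of the two internal vertices being decorated by $m$ or $\haut$; hence $\dim\P(E)(3)=2\times 2^2\times 3!=48$. The dimension formula recalled in Section~1 gives $\dim\P_\haut(3)=30$, so $\dim R=48-30=18$ and therefore $\dim R^\perp=48-18=30$.

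The computational heart is to check that the five displayed relations for $\H$ indeed lie in $R^\perp$. For this I would pair each candidate against a basis of $R$ (i.e.\ against the three defining relations of $\P_\haut$) under the Ginzburg--Kapranov pairing and verify that every pairing vanishes. The sign twist in the pairing is precisely what keeps the two associativity-type relations $\haut\circ(\haut,I)=\haut\circ(I,\haut)$ and $m\circ(m,I)=m\circ(I,m)$ together with the compatibility $m\circ(\haut,I)=\haut\circ(I,m)$ self-dual, while forcing the two new vanishing relations $\haut\circ(m,I)=0$ and $m\circ(I,\haut)=0$; this is exactly parallel to the $\P_\bas$ case, with the roles of the two mixed compositions interchanged.

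Finally I would argue that these five relations span all of $R^\perp$. As in Theorem~\ref{2}, it suffices to check that each relation generates a free $S_3$-module, i.e.\ a copy of the regular representation of dimension $6$, and that the five resulting submodules are in direct sum; then $5\times 6=30=\dim R^\perp$ forces equality. I expect the only real obstacle to be the bookkeeping: tracking the $S_3$-action and the Koszul signs accurately in the orthogonality check, and then verifying the freeness and direct-sum claims. Once the pairing is set up exactly as for $\P_\bas$, these verifications are routine.
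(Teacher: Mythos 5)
Your proposal is correct and follows essentially the same route as the paper: the paper proves this theorem by declaring it "similar to the proof of Theorem \ref{2}", which is exactly the dimension count $\dim R^\perp=48-18=30$ followed by checking that the five listed relations lie in $R^\perp$, each generating a free $S_3$-module, with the five copies in direct sum. Your spelled-out version (including the $5\times 6=30$ bookkeeping) is precisely what that sketch intends.
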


\begin{proof} Similar as the proof of theorem \ref{2}. \end{proof}\\

{\bf Remarks.} \begin {enumerate}
\item So $\H$ is a quotient of $\P_\haut$.
\item The operad $\H$ is the symmetrization of the non-$\Sigma$-operad $\tilde{\P}_\haut^!$, generated by $m$ and $\haut$, with relations:
$$ \left\{ \begin{array}{rcl}
\haut\circ(\haut,I)&=&\haut\circ(I,\haut),\\
m\circ(m,I)&=&m\circ(I,m),\\
m\circ(\haut,I)&=&\haut\circ(I,m),\\
\haut\circ(m,I)&=&0,\\
m\circ(I,\haut)&=&0.
\end{array} \right.$$
\item Graphically, the relations of $\H$ can be written in the following way:
$$\bdtroisun{$\haut $}{$\haut $}=\bdtroisdeux{$\haut $}{$\haut $},\hspace{1cm} \bdtroisun{$m$}{$m$}=\bdtroisdeux{$m$}{$m$},\hspace{1cm}
\bdtroisun{$m$}{$\haut $}=\bdtroisdeux{$\haut $}{$m$},$$
$$\bdtroisun{$\haut $}{$m$}=0,\hspace{1cm} \bdtroisdeux{$m$}{$\haut $}=0.$$ 
\end{enumerate}

\subsection{Free $\H$-algebras}

Let $V$ be finite-dimensional vector space. We put:
$$\left\{ \begin{array}{rcl}
T_\haut(V)(n)&=&\displaystyle \bigoplus_{k=1}^n V^{\otimes n} \mbox{ for all }n\geq 1,\\
T_\haut(V)&=&\displaystyle \bigoplus_{n=1}^{\infty} T_\haut(V)(n).
\end{array}\right.$$
In order to distinguish the different copies of $V^{\otimes n}$, we put:
$$T(V)(n)=\bigoplus_{k=1}^n \left(\underbrace{A\hauttimes \ldots \hauttimes  A\hauttimes A\otimes A \otimes \ldots \otimes A}_{
\mbox{$(k-1)$ signs $\hauttimes $}} \right).$$
The elements of $A\hauttimes \ldots \hauttimes  A\otimes \ldots \otimes A$ will be denoted by $v_1\hauttimes \ldots \hauttimes v_k \otimes
\ldots\otimes v_n$. We define $m$ and $\haut$ over $T_\haut(V)$ in the following way: 
for $v=v_1\hauttimes \ldots \hauttimes v_k \otimes \ldots \otimes v_m$ and  $w=w_1\hauttimes \ldots \hauttimes w_l \otimes \ldots \otimes w_n$,
\begin{eqnarray*}
vw&=& \left\{ \begin{array}{l}
0\mbox{ if } l\neq 1, \\
v_1\hauttimes \ldots \hauttimes v_k \otimes \ldots \otimes v_m \otimes w_1\otimes \ldots \otimes w_n \mbox{ if } l=1;\\
\end{array} \right.\\ \\
v \haut w&=& \left\{ \begin{array}{l}
0\mbox{ if } k\neq m-1, \\
v_1\hauttimes \ldots \hauttimes v_m \hauttimes w_1\hauttimes \ldots \hauttimes w_l \otimes \ldots \otimes w_n\mbox{ if } k=1.
\end{array}\right. \end{eqnarray*}

As for $\P_\bas$, we can prove the following result:

\begin{theo}
Let $n \geq 1$.
\begin{enumerate}
\item $dim(\H(n))=nn!$.
\item $\H(n)$ is freely generated, as a $S_n$-module, by the following trees:
$$\arbrequatre,$$
where $1 \leq i \leq n$.
\item $T_\haut(V)$ is the free $\H$-algebra generated by $V$.
\end{enumerate} \end{theo}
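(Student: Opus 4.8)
The plan is to mirror the structure established for $\G$, since the statement asserts the exact analogue of the earlier theorem about $\G(n)$ and $T_\bas(V)$. The earlier theorem was obtained from two inequalities: a lower bound $\dim(\H(n)) \geq nn!$ coming from exhibiting $T_\haut(V)$ as a quotient of the free $\H$-algebra, and an upper bound $\dim(\H(n)) \leq nn!$ coming from reducing all binary trees with $n$ indexed leaves to a normal form using the five relations of $\H$. So first I would verify that $T_\haut(V)$ is genuinely an $\H$-algebra: one checks directly that the two products $m$ and $\haut$ defined on $T_\haut(V)$ satisfy the five relations of $\H$. The key calculation is that $u \haut (v \haut w)$ and $(u \haut v) \haut w$ both vanish unless the pointed index is in the rightmost position, and that the mixed relation $m\circ(\haut,I)=\haut\circ(I,m)$ holds because both sides merge the tensor blocks in the same way while tracking the position of the $\hauttimes$ signs; the vanishing relations $\haut\circ(m,I)=0$ and $m\circ(I,\haut)=0$ follow from the position constraints $l=1$ for $m$ and $k=m-1$ for $\haut$ being incompatible in those compositions.

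Next I would establish generation, namely that $T_\haut(V)$ is generated by $V$ as an $\H$-algebra. As in the $\bas$ case, one writes an arbitrary element $v_1 \hauttimes \ldots \hauttimes v_k \otimes \ldots \otimes v_n$ as an explicit product of the generators $v_i \in V$ built from $m$ and $\haut$; the point is that the $(k-1)$ grafting signs $\hauttimes$ separating the first $k$ factors are produced by iterated $\haut$-products, while the remaining $\otimes$ signs are produced by $m$. This gives the surjection from the free $\H$-algebra $\bigoplus_n \tilde{\P}_\haut^!(n)\otimes V^{\otimes n}$ onto $T_\haut(V)$, and since $\dim T_\haut(V)(n) = n\dim V^{\otimes n}$, we extract $\dim \tilde{\P}_\haut^!(n) \geq n$, hence $\dim \H(n) \geq nn!$.

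For the upper bound I would repeat the normal-form argument of the $\G$ lemma. Using the first four relations of $\H$ one reduces every binary tree on $n$ leaves to a left comb whose internal vertices carry labels $a_1,\ldots,a_{n-1}\in\{m,\haut\}$, and then the fifth relation $m\circ(I,\haut)=0$ forces the allowed label sequences into the shape where a block of $\haut$'s sits above a block of $m$'s (the mirror image of the $\bas$ case, reflecting that here it is $\haut$ rather than $\bas$ that is obstructed from the right). This leaves exactly $n$ surviving tree shapes indexed by $1\le i\le n$, so $\dim\H(n)\le nn!$. Combining the two bounds gives equality, shows the displayed trees $\arbrequatre$ freely generate $\H(n)$ as an $S_n$-module, and forces the surjection above to be an isomorphism, so $T_\haut(V)$ is the free $\H$-algebra on $V$.

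The main obstacle I anticipate is bookkeeping rather than conceptual: the position index $k$ tracking how many $\hauttimes$ signs precede the single $\otimes$-transition must be handled carefully in both the relation-checking and the reduction, because the constraint defining $\haut$ here ($k=m-1$, i.e.\ grafting at the rightmost available slot) is asymmetric to the $\bas$ case and it is easy to misplace an index when verifying $\haut\circ(\haut,I)=\haut\circ(I,\haut)$. Since the author explicitly writes ``As for $\P_\bas$, we can prove the following result,'' I expect the intended proof to be exactly this transport of the three $\bas$-lemmas through the left-right symmetry exchanging the roles of $\bas$ and $\haut$, so the write-up can legitimately be terse and refer back to the $\G$ arguments.
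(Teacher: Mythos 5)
Your proposal is correct and is exactly the route the paper intends: the paper gives no separate proof here, saying only ``As for $\P_\bas$, we can prove the following result,'' so the argument is precisely your transport of the three $\bas$-steps (verify the five relations on $T_\haut(V)$ and generation by $V$ for the lower bound $\dim\H(n)\geq nn!$, then the comb normal form with the relation $m\circ(I,\haut)=0$ excluding an $m$ directly below a $\haut$ for the upper bound). The only quibble is cosmetic: the surviving label sequences put the block of $\haut$'s \emph{below} the block of $m$'s, literally the same picture as in the $\bas$ case with $\bas$ replaced by $\haut$, not its mirror image, and the paper's stated condition ``$k\neq m-1$'' versus ``$k=1$'' in the definition of $v\haut w$ is itself a typo (the intended condition is $k=m$), so your caution about the index bookkeeping is well placed.
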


\subsection{Homology of a $\P_\haut$-algebra}

Let us now describe the cofree $\P_\haut$-algebra cogenerated by $V$. By duality, it is equal to $T_\haut(V)$ as a vector space, 
with coproducts given in the following way: for $v=v_1 \hauttimes \ldots \hauttimes v_k \otimes \ldots \otimes v_m$,
\begin{eqnarray*}
\Delta(v)&=&\sum_{i=k}^{m-1} (v_1 \hauttimes \ldots \hauttimes v_k \otimes \ldots \otimes v_i) \otimes (v_{i+1}\otimes \ldots \otimes v_m),\\
\Delta_\haut(v)&=&\sum_{i=1}^{k-1} (v_1\hauttimes \ldots \hauttimes v_i)\otimes (v_{i+1}\hauttimes \ldots \hauttimes v_k \otimes \ldots \otimes v_m).
\end{eqnarray*}

Let $A$ be a $\P_\haut$-algebra. The homology complex of $A$ is given by the shifted cofree coalgebra $T_\haut(V)[-1]$,
with differential $d:T_\haut(V)(n) \longrightarrow T_\haut(V)(n-1)$, uniquely determined by the following conditions:
\begin{enumerate}
\item for all $a,b \in A$, $d(a \otimes b)=ab$.
\item for all $a,b \in A$, $d(a\hauttimes b)=a\haut b$.
\item Let $\theta: T_\haut(A) \longrightarrow T_\haut(A)$ be the following application:
$$ \theta: \left\{\begin{array}{rcl}
T_\haut(A) & \longrightarrow &T_\haut(A)\\
x& \longrightarrow &(-1)^{degree(x)}x \mbox{ for all homogeneous $x$}.
\end{array}\right. $$
Then $d$ is a $\theta$-coderivation: for all $x\in T_\haut(A)$,
\begin{eqnarray*}
\Delta(d(x))&=&(d\otimes Id +\theta \otimes Id) \circ \Delta(x),\\
\Delta_\haut(d(x))&=&(d\otimes Id +\theta \otimes Id) \circ \Delta_\haut(x).
\end{eqnarray*} \end{enumerate}
So, $d$ is the application which sends the element $v_1 \otimes \ldots \otimes \dot{v}_k \otimes \ldots \otimes v_m$ to:
\begin{eqnarray*}
&&d(v_1\hauttimes \ldots \hauttimes v_k \otimes \ldots \otimes v_n)\\
&=&\sum_{i=1}^{k-1} (-1)^{i-1} v_1 \hauttimes \ldots \hauttimes v_{i-1} \hauttimes v_i\haut v_{i+1}
\hauttimes v_{i+2} \hauttimes \ldots \hauttimes v_k \otimes \ldots \otimes v_n\\
&&+\sum_{i=k}^{n-1}(-1)^{i-1}v_1\hauttimes \ldots \hauttimes v_k \otimes \ldots \otimes 
v_{i-1} \otimes v_i v_{i+1}\otimes v_{i+2} \otimes \ldots  \otimes v_n.
\end{eqnarray*}
This homology will be denoted by $H_*^\haut(A)$. More clearly, for all $n \in \mathbb{N}$:
$$H_n^\haut(A)=\frac{Ker\left(d_{\mid T_\haut(A)(n+1)}\right)}{Im\left(d_{\mid T_\haut(A)(n+2)}\right)}.$$

{\bf Examples.} Let $v_1,v_2,v_3 \in A$.
$$\left\{ \begin{array}{rcl}
d(v_1)&=&0,\\
d(v_1\otimes v_2)&=&v_1v_2,\\
d(v_1\hauttimes v_2)&=&v_1\haut v_2,\\
d(v_1\otimes v_2\otimes v_3)&=&v_1v_2\otimes v_3- v_1\otimes v_2v_3,\\
d(v_1\hauttimes v_2\otimes v_3)&=&v_1\haut v_2\otimes v_3- v_1\hauttimes v_2v_3,\\
d(v_1\hauttimes v_2\hauttimes v_3)&=&v_1\haut v_2\hauttimes v_3- v_1\hauttimes v_2\haut v_3.
\end{array}\right.$$
So:
$$\left\{ \begin{array}{rcl}
d^2(v_1\otimes v_2\otimes v_3)&=&(v_1v_2) v_3- v_1(v_2v_3),\\
d^2(v_1\hauttimes v_2\otimes v_3)&=&(v_1\haut v_2) v_3- v_1\haut(v_2v_3),\\
d^2(v_1\hauttimes v_2\hauttimes v_3)&=&(v_1\haut v_2)\haut v_3- v_1\haut(v_2\haut v_3).
\end{array}\right.$$
So the nullity of $d^2$ on $T_\haut(A)(3)$ is equivalent to the three relations defining $\P_\haut$-algebras, as for $\P_\bas$. In particular:
$$H_0^\haut(A)=\frac{A}{A.A+A\haut A}.$$

\subsection{Homology of free $\P_\haut$-algebras}

The aim of this paragraph is to prove the following result:

\begin{theo}
let $N \geq 1$ and let $A$ be the free $\P_\haut$-algebra generated by $D$ elements.
Then $H_0^\haut(A)$ is $D$-dimensional; if $n \geq 1$, $H_n^\haut(A)=(0)$.
\end{theo}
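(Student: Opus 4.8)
The plan is to transpose, step for step, the argument already carried out for the free $\P_\bas$-algebra, now applied to the homology complex of $A$, which is the shifted cofree $\P_\haut$-coalgebra $T_\haut(A)$ equipped with the differential $d$ recalled above. I would set $C_n=T_\haut(A)(n)$, split it into the copies $C_n^k$ (the $k$-th summand, carrying $k-1$ signs $\hauttimes$), and form $C_n^{\leq k}=\bigoplus_{i\leq k}C_n^i$. Reading off the formula for $d$, the $\haut$-merges lower $k$ by one while the $m$-merges preserve $k$, so $d$ sends $C_n^k$ into $C_{n-1}^k\oplus C_{n-1}^{k-1}$; hence each $C_*^{\leq k}$ is a subcomplex. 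The bottom layer $C_*^{\leq 1}$ consists of the purely $\otimes$ (i.e. $m$-side) elements and is exactly the shifted Hochschild complex of the concatenation algebra $(A,m)$. Since $(A,m)$ is the free associative algebra on $\T^\D$, this complex is exact in degrees $\geq 2$, which starts the induction.

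First I would run the analogue of the \emph{first step}: fixing $n\geq 2$, I show $Ker(d_{|C_n^{\leq k}})\subseteq Im(d)$ by induction on $k\leq n-1$. Given a cycle $x=\sum_{i\leq k}x_i$, projecting $d(x)$ onto $C_{n-1}^k$ kills every contribution except the $m$-Hochschild differential acting on the $m$-side $v_k\otimes\cdots\otimes v_n$ of $x_k$ (of length $n-k+1\geq 2$); its vanishing, together with exactness of the $(A,m)$-complex, lets me lift and subtract a suitable $w$, pushing $x$ into $C_n^{\leq k-1}$ and invoking the inductive hypothesis. This is verbatim the $\P_\bas$ computation.

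The genuinely new difficulty is the \emph{second step}, the top copy $k=n$: here $C_n^n$ is purely $\hauttimes$, the $m$-side has length one, and the surviving differential is instead the Hochschild differential of the grafting product $\haut$. The key point I must establish is that $(A,\haut)$ is again a free associative algebra: $\haut$ is associative in every $\P_\haut$-algebra, and in the free one the leftmost-branch decomposition identifies $(A,\haut)$ with the free associative algebra on the forests whose first tree is a single vertex (a set in weight-preserving bijection with $\T^\D$, as one checks on generating series). Granting this, the shifted $\haut$-Hochschild complex is exact in degrees $\geq 2$, and I would then mimic the $\P_\bas$ second step: project $d(x)$ onto the top weight $M$ of the last factor, deduce that the corresponding $\haut$-Hochschild differential on $v_1\hauttimes\cdots\hauttimes v_{n-1}$ vanishes, lift it, and subtract the boundary of the resulting $w\in C_{n+1}^{n+1}$. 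The pivot merge $u_n\haut v_n$ raises the weight of the last factor, so a decreasing induction on that weight (bounded by the total weight) terminates and yields $Ker(d_{|C_n})\subseteq Im(d)$ for all $n\geq 3$; combined with the first step this gives $H_n^\haut(A)=0$ for $n\geq 2$.

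Finally I would treat the remaining degree $H_1^\haut(A)$ (the $n=2$ case, where the $\haut$-Hochschild length drops to one and the two steps do not apply) by the same case analysis on planar forests as for $\P_\bas$, now using left-leaf grafting and the operators $B_d$ to reduce a general $1$-cycle in $C_2$ to a boundary; and I would read off $H_0^\haut(A)=A/(A.A+A\haut A)$, whose only survivors are the single-vertex trees, so $\dim H_0^\haut(A)=D$. I expect the main obstacle to be the second step, and within it the freeness of $(A,\haut)$: this is the one ingredient with no counterpart in the $\P_\bas$ argument, where both steps used only $(A,m)$, and everything downstream depends on the acyclicity it supplies.
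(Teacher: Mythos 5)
Your proposal is correct and follows the paper's own proof essentially step for step: the same filtration $C_n^{\leq k}$ by the number of $\hauttimes$ signs, the same two-stage induction (Hochschild exactness of the free concatenation algebra $(A,m)$ for $k\leq n-1$, then the purely-$\hauttimes$ top layer handled by a decreasing induction on the weight of the last factor), the same separate case analysis for $H_1^\haut$, and the same identification of $H_0^\haut$. The one ingredient you rightly single out as having no counterpart in the $\P_\bas$ argument --- that $(A,\haut)$ is a free associative algebra on the forests whose first tree is a single vertex, obtained from the leftmost-branch decomposition --- is precisely the paper's Lemma~\ref{11} (unique factorisation $F=F_1\haut\ldots\haut F_n$ with $F_i=\tun_{d_i}G_i$), used there for exactly the same purpose.
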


\begin{proof} {\it Preliminaries.} We put, for $k,n \in \mathbb{N}^*$:
$$ \left\{ \begin{array}{rcl}
{C'}_n&=&T_\haut(A)(n),\\[2mm]
{C'}_n^k&=& \underbrace{A \hauttimes \ldots \hauttimes  A \otimes \ldots \otimes A}_{
\mbox{$k-1$ signs $\hauttimes$}} \subseteq {C'}_n \mbox{ if } k\leq n, \\[8mm]
{C'}_n^{\leq k}&=&\bigoplus_{i\leq k,n} {C'}_n^i\subseteq {C'}_n.
\end{array}\right.$$
For all $k \in \mathbb{N}^*$, ${C'}_*^{\leq k}$ is a subcomplex of ${C'}_n$. In particular, ${C'}_*^{\leq 1}$ is isomorphic to the complex 
defined by ${C'}_n=A^{\otimes n}$, with differential given by:
$$d': \left\{\begin{array}{rcl}
A^{\otimes n}& \longrightarrow & A^{\otimes (n-1)}\\
a_1\otimes \ldots \otimes a_n & \longrightarrow & \displaystyle \sum_{i=1}^{n-1} (-1)^{i-1} a_1\otimes \ldots \otimes a_{i-1} \otimes a_ia_{i+1}
\otimes a_{i+2}\otimes \ldots \otimes a_n.
\end{array}\right. $$
Hence, the homology of ${C'}_*$ is the (shifted) Hochschild homology of $A$.
As $A$ is a free (non unitary) associative algebra, this homology is concentrated in degree $1$. So:
\begin{equation} \label{E1}
Ker\left(d_{| {C'}_n^{\leq 1}}\right)\subseteq Im(d) \mbox{ if  }n \geq 2.
\end{equation}

Moreover, $C'_*$ admits a subcomplex defined by $C''_*(n)=A\hauttimes \ldots \hauttimes A$, with differential given by:
$$d : \left\{\begin{array}{rcl}
C''_*(n)& \longrightarrow & C''_*(n-1)\\
v_1\hauttimes \ldots \hauttimes v_n & \longrightarrow &\displaystyle \sum_{i=1}^{n-1} (-1)^{i-1} v_1\hauttimes \ldots \hauttimes v_{i-1} \otimes
v_i\haut v_{i+1} \hauttimes v_{i+2}\hauttimes \ldots \hauttimes v_n.
\end{array}\right. $$
Hence, the homology of this subcomplex is the shifted Hochschild homology of the associative algebra $(A,\haut)$.

\begin{lemma} \label{11}
Every forest $F \in \F^\D-\{1\}$ can be uniquely written as $F_1\haut \ldots \haut F_n$, 
where the $F_i$'s are elements of $\F^\D$ of the form  $F_i=\tun_{d_i}G_i$.
\end{lemma}

\begin{proof} {\it Existence.} By induction on the weight of $F$. If $weight(F)=1$, $F=\tun_d$ and the result is obvious.
If $weight(F) \geq 2$, we put $F=B^+_d(H_1)H_2$, with $weight(H_1)<weight(F)$. If $H_1=1$, the result is obvious. 
If $H_1 \neq 1$, we apply the induction hypothesis on $H_1$, so it can be written as $H_1=F_1\haut \ldots \haut F_n$, with $F_i=\tun_{d_i}G_i$. 
We put  $F_{n+1}=\tun_d H_2$, so $F=F_1\haut \ldots \haut F_{n+1}$.\\

{\it Unicity.} By induction on the weight of $F$. If $weight(F)=1$, then $F=\tun_d$ and this is obvious. If $weight(F) \geq 2$, 
we put $F=B_d(H_1)H_2$, with $weight(H_1)<weight(F)$. If $F=F_1\haut\ldots \haut F_n$, then $F_n=\tun_d H_2$ 
and $F_1\haut \ldots \haut F_{n-1}=H_1$. Hence, $F_n$ is unique. We conclude with the induction hypothesis. \end{proof} \\

This lemma implies that $(A,\haut)$ is freely generated by forests of the form $\tun_d G$. So:
\begin{equation} \label{E2}
Ker\left(d_{| C''_n}\right)\subseteq Im(d) \mbox{ if  }n \geq 2.
\end{equation}

{\it First step.} Let us fix $n \geq 2$. We show by induction on $k$ the following property:
$$Ker\left(d_{| {C'}_n^{\leq k}}\right)\subseteq Im(d) \mbox{ for all } 1\leq k \leq n-1.$$
For $k=1$, this is (\ref{E1}). Let us suppose $2\leq k<n$ and $Ker\left(d_{| {C'}_n^{\leq k-1}}\right)\subseteq Im(d)$. 
Let $x=\displaystyle \sum_{i=1}^{k} x_i\in Ker\left(d_{| {C'}_n^{\leq k}}\right)$, with $x_i \in {C'}_n^i$. If $x_k=0$, then 
$x \in Ker\left(d_{| {C'}_n^{\leq k-1}}\right)$ and the induction hypothesis holds. We then suppose $x_k \neq 0$, and we put:
$$x_k=\sum v_1\hauttimes \ldots \hauttimes v_k \otimes \ldots \otimes v_n.$$
Let us project $d(x)$ over ${C'}_{n-1}^k$. We get:
\begin{eqnarray*}
&&\sum_{i=1}^{k-1} \pi_k(d(x_i))+\sum_{i=1}^{k-1}(-1)^{i-1} 
\sum \pi_k(v_1\hauttimes \ldots \hauttimes v_i\haut v_{i+1} \hauttimes \ldots \hauttimes v_k \otimes \ldots \otimes v_n)\\
&&+\sum_{i=k}^{n-1} (-1)^{i-1} \sum \pi_k(v_1\hauttimes \ldots \hauttimes v_k \otimes \ldots \otimes v_iv_{i+1} \otimes \ldots \otimes v_n)\\
&=&0+0+(-1)^{k-1} \sum v_1\hauttimes \ldots \hauttimes v_{k-1}\hauttimes d'(v_k\otimes \ldots \otimes v_n)\\
&=&0.
\end{eqnarray*}
Hence, we can suppose $d'(v_k\otimes \ldots \otimes v_n)=0$. As $n-k+1\geq 2$, by (\ref{E1}), there exists an element
$\sum w_k \otimes \ldots \otimes w_{n+1} \in A^{\otimes (n-k+2)}$, such that:
$$d'\left(\sum w_k \otimes \ldots \otimes w_{n+1} \right)=v_k\otimes \ldots \otimes v_n.$$
We put $w=\sum v_1\hauttimes \ldots \hauttimes v_{k-1} \hauttimes \left(\sum w_k  \otimes \ldots \otimes w_{n+1}\right)$.
Then, $d(w)=x_k+{C'}_n^{k-1}$, so $x-d(w)\in {C'}_n^{k-1}$. As $Im(d)\subseteq Ker(d)$, 
$x-d(w) \in Ker\left(d_{| {C'}_n^{\leq k-1}}\right)\subseteq Im(d)$ by the induction hypothesis. Hence, $x \in Im(d)$.\\

{\it Second step.} Let us show that, if $n \geq 3$,  $Ker\left(d_{|{C'}_n^{\leq n}}\right)\subseteq Im(d)$. Take $x \in Ker\left(d_{| C_n^{\leq n}}\right)$,
written as $x=\displaystyle \sum_{i=1}^n x_i$, with $x_i \in C_n^i$ and $\displaystyle x_n=\sum_i v^i_1\hauttimes \ldots \hauttimes v^i_n$.
We can suppose the $v^i_j$'s homogeneous. Let us fix an integer $N$, greater than the degree of $x_n$, and an integer $M$, 
smaller than $\displaystyle\min_i\{weight(v^i_n)\}$. Let us show by a decreasing induction on $M$ the following property:
for all $x  \in Ker\left(d_{| {C'}_n^{\leq n}}\right)$, of weight $\leq N$, and such that $\displaystyle\min_i\{weight(v^i_n)\}\geq M$, then $x \in Im(d)$.
If $M>N$, such an $x$ is zero, and the result is obvious. Suppose the result at rank $M+1$ and let us show it at rank $M$.
Let $A_M$ be the homogeneous (for the weight) component of degree $M$ of $A$ and let us project $d(x)$ over 
$A\hauttimes \ldots \hauttimes A\hauttimes A_M$. We get: 
$$0=\varpi_M(d(x))=\sum_{i,\: weight(v_n^i)=M} d(v_1^i\hauttimes\ldots \hauttimes v_{n-1}^i)\hauttimes v_n^i.$$
Hence, we can suppose that, for all $i$ such that $weight(v_n^i)=M$, $d(v_1^i\hauttimes\ldots \hauttimes v_{n-1}^i)=0$. As $n \geq 3$, 
by (\ref{E2}), there exists $\displaystyle \sum_j w^{i,j}_1 \hauttimes \ldots \hauttimes w^{i,j}_n \in {C'}_n$ such that:
$$d\left(\sum_j w^{i,j}_1 \hauttimes \ldots \hauttimes w^{i,j}_n \right)=v_1^i\hauttimes\ldots \hauttimes v_{n-1}^i.$$
As $d$  is homogeneous for the weight, we can suppose that the weight of this element is smaller than the weight of 
$v_1^i\otimes\ldots \otimes v_{n-1}^i$. We then put:
$$w=\sum_{i,\: weight(v_n^i)=M} \sum_j w^{i,j}_1 \hauttimes \ldots \hauttimes w^{i,j}_n\hauttimes v_n^i.$$
So $x-d(w) \in Ker\left(d_{| {C'}_n^{\leq n}}\right)$, with a weight $\leq N$, and satisfies the property on the $v_n^i$'s for $M+1$. 
By the induction hypothesis, $x-d(w)\in Im(d)$, so $x \in Im(d)$.\\

So, if $n\geq 2$, as ${C'}_n^{\leq n}=C'_n$, $H_n^{\haut}(A)=(0)$.\\

{\it Third step.} We now compute $H_1^\haut(A)$. We take an element $x\in C'_2$ and show that it belongs to $Im(d)$. 
This element can be written as:
$$x= \sum_{F,G\in \F^\D-\{1\}} a_{F,G} F \hauttimes G-\sum_{F,G\in \F^\D-\{1\}} b_{F,G} F \otimes G.$$
so:
$$d(x)= \sum_{F,G\in \F^\D-\{1\}} a_{F,G} F \haut G-\sum_{F,G\in \F^\D-\{1\}} b_{F,G} FG.$$
As a consequence, the following assertions are equivalent:
\begin{enumerate}
\item $d(x)=0$.
\item for all $H\in \F^\D-\{1\}$, $\displaystyle \sum_{F\haut G=H}a_{F,G} =\sum_{FG=H}b_{F,G}$.
\end{enumerate}

{\it First case.} For all $F,G\in \F^\D-\{1\}$, $a_{F,G}=0$, that is to say $x \in A\otimes A$: then the result comes directly from (\ref{E1}).\\

{\it Second case.}  $x=F_1\hauttimes F_2-G_1\otimes G_2$, $F_1,F_2,G_1,G_2 \in \F^\D$, such that $F_1\haut F_2=G_1G_2=H$.
We put $H=t_1\ldots t_n$ et $t_1=H_1\haut \ldots \haut H_m$, $t_1,\ldots,t_n\in \T^\D$, the  $H_i$'s of the form $\tun_{d_i}H'_i$ 
(lemma \ref{11}). Then there exists $i\in \{1,\ldots,n-1\}$, such that $G_1=t_1\ldots t_i$ and $G_2=t_{i+1}\ldots t_n$;
there exists $j\in \{1,\ldots,m-1\}$, such that $F_1=H_1\haut \ldots \haut H_j$ and $F_2=(H_{j+1}\haut \ldots \haut H_m)t_2\ldots t_n$. So:
\begin{eqnarray*}
&&d( H_1\haut \ldots\haut H_j \hauttimes (H_{j+1} \haut \ldots\haut H_m)t_2\ldots t_i  \otimes t_{i+1} \ldots t_n)\\
&=&(H_1 \haut \ldots \haut H_j)\haut (H_{j+1} \haut \ldots\haut H_m)t_2\ldots t_i  \otimes t_{i+1} \ldots t_n)\\
&&- H_1\haut \ldots\haut H_j \hauttimes  (H_{j+1} \haut \ldots\haut H_m)t_2\ldots t_i t_{i+1} \ldots t_n)\\
&=&G_1\otimes G_2-F_1\hauttimes F_2.
\end{eqnarray*}
Hence, $x \in Im(d)$.\\

{\it Third case.} We suppose that the following condition holds:
$$(a_{F,G}\neq 0 )\:\Longrightarrow \: (G\notin \T^\D).$$
So, $x$ can be written as:
$$x= \sum_{F,G\in \F^\D,\: t\in \T^\D} a_{F,tG} F\hauttimes tG -\sum_{F,G\in \F^\D} b_{F,G} F\otimes G.$$
By the second case, $F\hauttimes tG - F\haut t\otimes G \in Im(d)\subseteq Ker(d)$.
So, the following element belongs to $Ker(d)$:
\begin{eqnarray*}
&&x-\sum_{F,G\in \F^\D,\: t\in \T^\D} a_{F,tG} (F\hauttimes tG - F\haut t\otimes G)\\
&=&- \sum_{F,G\in \F^\D} b_{F,G} F\otimes G+\sum_{F,G\in \F^\D,\: t\in \T^\D} a_{F,tG}  F\haut t\otimes G.
\end{eqnarray*}
By the first case, this element belongs to $Im(d)$, so $x \in Im(d)$. \\

{\it Fourth case.} We suppose that the following condition holds:
$$(a_{F,G}\neq 0 )\:\Longrightarrow \: (G\notin \T^\D \mbox{ or } G=\tdun{$d$},\: d\in \D).$$
Let $H\in \F^\D-\{1\}$. Let us write $B^+_d(H)=H_1\haut \ldots \haut H_n$, $with H_i=\tun_{d_i} H'_i$ for all $i$ (lemma \ref{11}).
As $B^+_d(H)\in \T^\D$, $H_n=\tun_{d_n}$ and $H_1\haut\ldots \haut H_ {n-1}=H$. So:
\begin{eqnarray*}
0&=&\sum_{F\haut G=B_d(H)}a_{F,G}-\sum_{FG=B_d(H)}b_{F,G}\\
&=&\sum_{i=1}^n a_{H_1\haut \ldots\haut  H_i,H_{i+1}\haut \ldots \haut H_n}-0\\
&=&a_{H_1\haut \ldots \haut H_{n-1},\tdun{$d$}}+0\\
&=&a_{H,\tdun{$d$}}.
\end{eqnarray*}
(We used the condition on $x$ for the third equality). So, for all $F\in \F^\D$, $d\in \D$, we obtain $a_{F,\tdun{$d$}}=0$.
As a consequence, by the third case, $x \in Im(d)$.\\

{\it General case.} The following element belongs to $Ker(d)$:
\begin{eqnarray*}
x'&=& x+\sum_{F,G \in \F^\D,\: d\in \D} a_{F,B_d(G)}d(F\hauttimes G \hauttimes \tdun{$d$})\\
&=&x+\sum_{F,G \in \F^\D,\: d\in \D} a_{F,B_d(G)}F\haut G \hauttimes \tdun{$d$}
-\sum_{F,G \in \F^\D,\: d\in \D} a_{F,B_d(G)}F\hauttimes G\haut \tdun{$d$}\\
&=&x+\sum_{F,G \in \F^\D,\: d\in \D} a_{F,B_d(G)}F\haut G \hauttimes \tdun{$d$}-\sum_{F,G \in \F^\D,\: d\in \D} a_{F,B_d(G)}F\hauttimes B_d(G)\\
&=& \sum_{F\in \F^\D,\:G \in \F^\D-\T^\D}a_{F,G} F\hauttimes G+\sum_{F\in \F^\D,\:d\in \D}a_{F,G} F\hauttimes \tdun{$d$}\\
&&-\sum_{F,G\in \F^\D} b_{F,G} F\otimes G + \sum_{F,G \in \F^\D,\: d\in \D} a_{F,B_d(G)}FG \hauttimes \tdun{$d$}.
\end{eqnarray*}
So, $x'$ satisfies the condition of the fourth cas, so $x'\in Im(d)$. Hence, $x\in Im(d)$.\\

It remains to compute $H_0^\haut(A)$. This is equal to $A/(A.A+A\haut A)$, so a basis of $H_0^\haut(A)$ is given by the trees of weight $1$, 
so $dim(H_0^\haut(A))=D$. \end{proof}\\

As an immediate corollary:

\begin{cor}
The operad $\P_\haut$ is Koszul.
\end{cor}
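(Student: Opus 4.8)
The plan is to deduce the corollary directly from the Koszul duality criterion of Ginzburg and Kapranov \cite{Ginzburg} (see also \cite{Markl}), in exactly the same way as for $\P_\bas$. Recall that for a quadratic operad the chain complex built from the cofree coalgebra cogenerated by $V$ is precisely the operadic Koszul complex evaluated on the free algebra on $V$; the criterion then asserts that the operad is Koszul if and only if, for every finite-dimensional $V$, the homology of this complex is concentrated in degree $0$, where it reduces to $V$ itself. Here this complex is realised concretely as $(T_\haut(V)[-1],d)$, and $H_0^\haut$ is the space of indecomposables $A/(A.A+A\haut A)$.

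Consequently the entire substance of the argument is already contained in the preceding theorem. I would simply take $A$ to be the free $\P_\haut$-algebra on a $D$-dimensional space $V$ and invoke the two conclusions established there: $H_n^\haut(A)=(0)$ for every $n\geq 1$, and $H_0^\haut(A)$ is $D$-dimensional, hence equal to $V$. Because these hold for free $\P_\haut$-algebras on an arbitrary number $D$ of generators, the hypothesis of the criterion is met for all $V$, and the Koszulness of $\P_\haut$ follows at once.

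The only point that genuinely has to be checked --- and the sole place where the argument could fail --- is the identification of $(T_\haut(V),d)$ with the honest operadic Koszul complex rather than the homology vanishing itself, which is the real work and is already done. For this I would verify that $T_\haut(V)$, equipped with the two coproducts $\Delta$ and $\Delta_\haut$, is the cofree $\P_\haut$-coalgebra cogenerated by $V$ (dual to the earlier statement that $T_\haut(V)$ is the free $\H$-algebra on $V$), and that the $\theta$-coderivation $d$ is the differential induced by the duality pairing defining the Koszul dual $\H$. The decomposition $T_\haut(V)(n)=\bigoplus_{k=1}^n V^{\otimes n}$ is reassuring here: it is exactly $\H(n)\otimes_{S_n}V^{\otimes n}$, since $\H(n)$ was shown to be $S_n$-free of rank $n$. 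Granting these formal identifications, no further computation is needed and the corollary is immediate.
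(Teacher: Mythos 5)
Your proposal matches the paper exactly: the corollary is stated there as an immediate consequence of the preceding theorem on the homology of free $\P_\haut$-algebras, via the Ginzburg--Kapranov criterion that a quadratic operad is Koszul when the homology of its Koszul complex on free algebras is concentrated in degree $0$. Your additional remarks on identifying $(T_\haut(V),d)$ with the operadic Koszul complex only make explicit what the paper leaves implicit, so the argument is correct and essentially identical.
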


\bibliographystyle{amsplain}
\bibliography{biblio}

\end{document}